\newcommand{\bb}{\mathbb}
\newcommand{\cb}{\overline{C}} 
\newcommand{\ccc}{\mathscr{C}}
\newcommand{\cliff}{{\rm Cliff}} 
\newcommand{\cp}{\ccc_{P}} 
\newcommand{\ff}{{\rm F}}
\newcommand{\fff}{\mathscr{F}} 
\newcommand{\gap}{{\rm G}}
\newcommand{\gon}{{\rm gon}} 
\newcommand{\im}{{\rm im}}
\newcommand{\kk}{{{\rm K}}}
\newcommand{\mc}{\mathcal}
\newcommand{\nn}{\mathbb{N}}
\newcommand{\obp}{\overline{\mathcal{O}}_P}
\newcommand{\oo}{\mathcal{O}} 
\newcommand{\op}{\mathcal{O}_P} 
\newcommand{\pb}{\overline{P}} 
\newcommand{\sss}{{\rm S}}
\newcommand{\ww}{\omega} 
\newcommand{\wwp}{\omega_{P}} 
\def\sbsno{(\arabic{section}.\arabic{subsection})\enspace}
\def\defspec#1{\def\headspec{#1}%
  \ifx\headspec\empty 
  \else{\unkern\enspace(#1)}\fi
}       
\newtheoremstyle{italics}
  {6pt}
  {6pt}
  {\itshape}
  {}
  {\bfseries}
  {.}
  {.5em}
  {\sbsno \thmname{#1}{\rm \defspec{#3}}}
\newtheoremstyle{roman}
  {6pt}
  {6pt}
  {\rmfamily}
  {}
  {\bfseries}
  {.}
  {.5em}
  {\sbsno \thmname{#1}\defspec{#3}}
\theoremstyle{italics}
 \newtheorem{lem}[subsection]{Lemma}
 \newtheorem{prop}[subsection]{Proposition}
 \newtheorem{thm}[subsection]{Theorem}
\theoremstyle{roman}
 \newtheorem{defi}[subsection]{Definition}
 \newtheorem{rem}[subsection]{Remark}
 \newtheorem{sbs}[subsection]{} 
\numberwithin{equation}{subsection}
\numberwithin{equation}{subsection}
\begin{document}

\title{On Enriques-Babbage Theorem for singular curves }

\thanks{}



\author[L. Feital]{Lia Feital}
\address{Departamento de Matem\'atica, CCE, UFV
Av. P H Rolfs s/n, 36570-000 Vi\c{c}osa MG, Brazil}
\email{liafeital@ufv.br}

\author[N. Galdino]{Naam\~a Galdino}
\address{IMECC, Unicamp,
Rua S\'ergio Buarque de Holanda, 651, 13083-859 Campinas SP, Brazil}
\email{naama@ime.unicamp.br}

\author[R. V. Martins]{Renato Vidal Martins} 
\address{Departamento de Matem\'atica, ICEx, UFMG
Av. Ant\^onio Carlos 6627,
30123-970 Belo Horizonte MG, Brazil.}
\email{rvidalmartins@gmail.com}

\author[A. Souza]{\'Atila Souza}
\address{Departamento de Matem\'atica, ICEx, UFMG
Av. Ant\^onio Carlos 6627,
30123-970 Belo Horizonte MG, Brazil.}
\email{desouzaatilafelipe@gmail.com}

\keywords{Clifford index, gonality, Green's conjecture}

\subjclass[2010]{14H20, 14H45 \and 14H51}



\begin{abstract}
We propose a version of the Enriques-Babagge Theorem for a singular curve $C$, involving its canonical model $C'$. We provide a partial proof for an arbitrary curve $C$ and complete the proof for unicuspidal monomial curves by describing the generators of the ideal of $C'\subset\mathbb{P}^{g-1}$.
\end{abstract}

\maketitle


\section{Introduction}\label{Intro}

Let $C$ be an integral and complete curve over an algebraically closed field of arbitrary characteristic. First, suppose $C$ is smooth. In late 19th century, Max Noether proved in \cite{N} a celebrated result establishing that every canonical curve is projectively normal.  

Noether's result formed the basis for subsequent works by Enriques \cite{En}, Babbage \cite{B}, and Petri \cite{P}. From these articles one derives the following result: if $C$ is a canonical curve, then one, and only one, of the following three statements holds: (i) $C$ is cut out by quadrics; (ii) $C$ is trigonal; and (iii) $C$ is isomorphic to a plane quintic; later known as the Enriques-Babbage Theorem \cite{ACGH}*{p.\,124}.

In 1984, Green proposed in \cite{Gr}*{Conj.\ (5.1)} a conjecture that generalizes both results above. He called it \emph{Noether-Enriques-Petri Conjecture}, nowadays known as \emph{Green's Conjecture}:
$K_{p,2}(C,\omega)=0$ if and only if $p < \cliff(C)$
for all $p \geq 0$, relating the Koszul cohomology \cite{Gr}*{p.\,126} of the dualizing sheaf $\ww$ of $C$ to the Clifford index \cite{Martens}*{p.\,83} of $C$. A thorough study of this problem can be found, for example, in \cite{ApF}. The conjecture was proved for a general smooth curve by C. Voisin in \cites{Vo1, Vo2}. We give a more detailed description of the conjecture at the beginning of Section \ref{kcgc} and adjusting it to singular curves, allowing torsion free sheaves in the definitions of both the Koszul Cohomology and the Clifford index.

The connection with the above discussion is clear. If $p=0$, then the conjecture is equivalent to Max Noether's Theorem. Indeed, note that $K_{0,2}(C,\omega)=0$ is equivalent to saying that 
${\rm Sym}^2  H^0(C,\ww)\twoheadrightarrow H^0(C,\ww^2)$, which is sufficient to obtain the surjections for higher orders. On the other hand, $\cliff(C)>0$ is equivalent to saying that $C$ is non-hyperelliptic and, therefore, isomorphic to a canonical curve.

If $p=1$, then the conjecture is equivalent to the Enriques-Babbage Theorem. Indeed, suppose that $C$ is non-hyperelliptic. Then, $K_{1,2}(C,\ww)=0$ if and only if $C$, viewed as a canonical curve, is cut out by quadrics. On the other hand, $\cliff(C)=1$ if and only if $C$ is trigonal or isomorphic to a plane quintic.

Now assume $C$ is Gorenstein, possibly singular. Rosa-St\"ohr in \cite{RSt}*{Thm.\ 1} combined with St\"ohr-Viana in \cite{SV2}*{Thm.\ 1.1} characterized trigonal curves as follows: let $C$ be a non-hyperelliptic curve of arithmetic genus $g\geq 5$; then $C$ is trigonal if and only if $C$, viewed as a canonical curve, lies on a $2$-dimensional rational normal scroll in $\mathbb{P}^{g-1}$; if the scroll is a cone, then $C$ is singular and passes through the vertex (see \cite{KM2}*{Sec.\,4} for more details on scrolls). 

The aim of this work is extending the Enriques-Babbage statement for a larger class of curves, possibly non-Gorenstein. To this end, we introduce some concepts which we describe next. The key-ingredient to get results matching classical ones is the following definition: let $\pi :\cb\rightarrow C$ be the normalization map; the linear system $(\pi^*\ww/{\rm Torsion}(\pi^*\ww),H^0(\ww))$
defines a morphism $\psi :\cb\rightarrow{\mathbb{P}}^{g-1}$; call $C':=\psi(\cb)$ the \emph{(Rosenlicht) canonical model} of $C$ \cite{R}*{p.\,188\,top} (see \eqref{cm}). Now given $P\in C$, let $\op$ be the local ring, $\cp$ the conductor, and  $\delta_{P} := \dim (\overline{\mathcal{O}}_{P}/\mathcal{O}_{P})$ be the singularity degree. Set $\eta_{P}:=\delta_{P}-\dim (\mathcal{O}_{P}/\cp)$. Following \cite{BF}*{pp. 418, 433, Prps. 21, 28}, we say $C$ is \emph{Kunz} if it has only one non-Gorenstein point $P$, and such that $\eta_P=1$. Finally, recall that an embedded projective variety is said \emph{linearly normal} if the linear system of hyperplanes is complete. With this in mind, we have the following. 


\medskip

\noindent {\bf Theorem 1.} \emph{Let $C$ be an integral and complete curve over an algebraically closed field. Assume the canonical model $C'$ is linearly normal. The following hold:
\begin{itemize}
\item[(I)] Assume $C$ is non-Gorenstein. Then
\begin{itemize}
    \item[(i)] $K_{1,2}(C,\ww) = 0$ if and only if $C'$ is cut out by quadrics; 
    \item[(ii)] $C'$ is cut out by quadrics and cubics; 
    \item[(iii)] if $C$ is not Kunz, then $C'$ is cut out by quadrics. 
\end{itemize}
\item[(II)] If $C$ is unicuspidal and monomial, then one, and only one, of the following statements is valid:
\begin{itemize}
    \item[(i)] $C'$ is cut out by quadrics; or
    \item[(ii)] $C$ is trigonal and Gorenstein; or
    \item[(iii)] $C$ is isomorphic to a plane quintic; or 
    \item[(iv)] $C$ is Kunz,
\end{itemize}
Moreover, if $C$ is non-Gorenstein and trigonal, then if the $g_{3}^{1}$ is base point free then $C$ satisfies (iv); otherwise, it may be included in cases (i) or (iv)
\end{itemize}}

The above result is proved in \eqref{Kleiman} and \eqref{gororngor}. In itens (I).(ii) and (iii), we are essentially rephrasing the last assertion in \cite{KM}*{Thm.~6.4}. We discuss, in \eqref{remebb}, the possibility of dropping $C$ being unicuspidal monomial in (II). In fact, to get (II) in general we only need: (a) a version of Enriques-Babbage Theorem for Gorenstein curves (which could be obtained via, for instance, \cites{CS,OS,Sc,St1}) and (b) showing that a Kunz curve cannot be cut out by quadrics, owing to (I). 

Then (II) would apply to all curves $C$ whose canonical model $C'$ is linearly normal, namely, to all Gorenstein curves and to all \emph{nearly Gorenstein} curves. The latter were introduced in \cite{KM}*{Dfn.~5.7} and are defined by local properties based on \cite{BF}*{p.~418}. Nearly Gorenstein curves are characterized as the only non-Gorenstein curves with $C'$ is linearly normal \cite{KM}*{Thm.~5.10} (see \eqref{defnng} and \eqref{remrel}).

 Note also that, if we disregard (iv), then the statement of (II) holds for any smooth curve, including hyperelliptic ones, thus extending the classical result. Indeed, if $C$ is hyperelliptic, then $C'$ is the rational normal curve of degree $g-1$ in $\mathbb{P}^{g-1}$ by\cite{KM}*{Thm. 3.4}, so (i) holds.   

We derive (a) and (b) above for a rational monomial curve $C$ by means of a stronger result, that is, we give a thorough description of the generators of the ideal $I(C')$ of the canonical model $C'\subset\mathbb{P}^{g-1}$. To state the result, we set some notation. Let $\gap =\{\ell_1=1,\ell_2,\ldots,\ell_{g-1},\ell_{g}=\gamma \}$ be the set of gaps in the semigroup of values of the singularity of $C$. For every $2 \leq s \leq \gamma$, consider all $s$ partitions as the sum of two gaps, say 
$$
s=a_{s_{i}}+b_{s_{i}},   \text{for} \  i=0, \ldots, \nu_{s}
$$
with $a_{s_{i}} \leq b_{s_{i}}$ and $a_{s_{0}}<a_{s_{1}} \ldots < a_{s_{\nu_{s}}}$. Set $a_{s}:=a_{s_{0}}$ and $b_{s}:=b_{s_{0}}$. In this case, we say that $a_s+b_s=s$ is a \emph{minimal partition} (of $s$). With this in mind, we have the following result:

\medskip

\noindent {\bf Theorem 2.} \emph{Let $C$ be a unicuspidal monomial curve and nearly Gorenstein. Then we can write $\mathbb{P}^{g-1}={(X_{\ell_{1}}: \ldots : X_{\ell_{g}})}$ in such a way that the statements are valid:
\begin{itemize}
\item[(i)] If $C$ is not Kunz, then the ideal of $C'$ is given by
$$
I(C')=\langle X_{a_{s}}X_{b_{s}}-X_{a_{s_{i}}}X_{b_{s_{i}}} \rangle
$$ 
to $s \ in \ \{2, \ldots, \gamma\}$ and $i \in \{1, \ldots, \nu_{s}\}$.
\item[(ii)] If $C$ is Kunz, but not trigonal with $g_3^1$ free of basis points, then the ideal of $C'$ is given by
$$
I(C') = \langle X_{a_{s}}X_{b_{s}}-X_{a_{s_{i}}}X_{b_{s_{i}}}, X_{\gamma/2}^{3}-X_{1}X_{a}X_{b}, X_{\gamma/2}^{3}-X_{a'}X_{b'}X_{\gamma}\rangle
$$
where $a+b=\gamma/2-1$ and $a'+b'=\gamma/2$ are minimal partitions.
\item[(iii)] If $C$ is trigonal with $g_3^1$ free of base points, then the ideal of $C'$ is given by
\begin{align*}
    I(C') = &\langle  X_{a_{s}}X_{b_{s}}-X_{a_{s_{i}}}X_{b_{s_{i}}},  X_{\gamma/2}^{3}-X_{1}X_{a}X_{b}, X_{\gamma/2}^{3}-X_{a'}X_{b'}X_{\gamma}, \\
    & X_{2}X_{\gamma/2}^{2}-X_{1}^{2}X_{\gamma}, X_{3k+r}X_{\gamma/2}^{2}-X_{1}X_{c}X_{d}, X_{3k+r}X_{\gamma/2}^{2}-X_{c'}X_{d'}X_{\gamma}\rangle
\end{align*}
for $1 \leq k \leq m-1$, $0 < r < 3$, $\gamma=2(3m+r)$ where $c+d=3k+r+\gamma-1$ are $c'+d'=3k+r$ minimal partitions. 
\end{itemize}}

The above result is proved in \eqref{EBmonomial}. It heavily depends on the computation of the dimensions of the homogeneous parts $I_n(C')$, which we give a proof for an arbitrary curve in \eqref{thmdim}. The techniques we use are highly inspired by those developed by Stöhr in \cite{St1} dealing with \emph{exceptional} monomials. Our approach has some differences though, which we stress in \eqref{remebm}. 

Finally, we raise a couple of questions that could be addressed in the near future: (1) Does Theorem 1.(II) hold in general as explained in \eqref{remebb}? What can be said if $C'$ is not linearly normal?; (2) Is it possible to describe the moduli of rational nearly Gorenstein curves with a prescribed semigroup at a singularity as explained in \eqref{remebm}?; (3)  For smooth curves, Enriques-Babbage Theorem can be stated as in Green's conjecture, i.e., $K_{1,2}(C,\ww)=0$ iff $\cliff(C)>1$; but Theorem 1.(II) yiedls this is false for non-Gorenstein curves. In fact,  we build in \eqref{cliffgthm2} Kunz curves of arbitrary high Clifford index, but $K_{1,2}(C,\ww)$ doesn't vanish for those curves due to Theorem 1.(I).(i) and Theorem 2. Therefore, it would be interesting to characterize curves for which $\cliff(C)=1$. A study of the Clifford index (and Clifford dimension) of an integral curve is presented in \cite{FGMS}, although this question has not been fully addressed. In \cite{FGMS}*{Prop. \,3.5 (v)}, for instance, it is shown that any trigonal integral curve has Clifford index \(1\), but there are no statements regarding sufficiency.

\

\noindent{\bf Acknowledgments.} 
This work corresponds to part of the Ph.D. Thesis \cite{At} of the fourth-named author. The third-named author is partially supported by CNPq grant number 308950/2023-2 and FAPESP grant number 2024/15918-8. The fourth-named author is supported by CAPES grant number 88887.821937/2023-00.

\section{Preliminaries}\label{prelim}

Let $C$ be an integral and projective curve of arithmetic genus $g$ defined over an algebraically closed field $k$ with structure sheaf $\oo_C$, or simply $\oo$. Let $\ww$ be the dualizing sheaf of $C$. Recall that a point $P\in C$ is \emph{Gorenstein} if the stalk $\ww_P$ is a free $\oo_P$-module,
where $\ww$ stands for the dualizing sheaf on $C$. 
The curve $C$ is said to be \emph{Gorenstein} if all of its points are Gorenstein, or equivalently,
if $\ww$ is invertible.

\begin{sbs}[Linear Systems] \label{LinSys}
 Following \cite{AK}, a \emph{linear system of degree $d$ and dimension $r$} on $C$ (referred as a $g_d^r$, for short) is a set of exact sequences, identified by a pair
$$
(\fff,V)= \{0\to \mathcal{I} \stackrel{\iota_{\lambda}}{\longrightarrow} \ww \longrightarrow \mathcal{Q}_\lambda \to 0\}_{\lambda\in V}
$$
where $\fff$ is a torsion-free sheaf of rank $1$ on $C$ with $\deg \fff :=\chi (\fff )-\chi (\mathcal{O}) = d$, and $V$ is a nonzero subspace of $H^{0}(\fff)$ of dimension $r+1$. Also, $\mathcal{I} := \mathcal{H}{\rm om}(\fff,\ww)$ and $\iota_{\lambda}:=\mathcal{H}{\rm om}(\lambda,\ww)$. Note that, if $\oo\subset\fff$, then
\begin{equation}\label{eqdega}
 \deg\fff = \sum_{P\in C} \dim\bigl(\fff_P\big/\op \bigr)
  \end{equation}

\begin{defi}
The \emph{gonality} of $C$ is the smallest $d$ for which $C$ carries a $g_d^1$, or, equivalently, for which there is a torsion free sheaf $\mathcal{F}$ of rank $1$ on $C$ with degree $d$ and $h^0(\mathcal{F})\geq 2$. We say $\fff$ \emph{contributes} to $\gon(C)$ if $h^0(\fff)\geq 2$. If, besides, $\deg(\fff)=\gon(C)$, we say $\fff$ \emph{computes} $\gon(C)$.
\end{defi}

In addition, call a point $P\in C$ a \emph{base point of $(\fff,V)$} if,
for all $\lambda\in V$, the injection $\lambda :\op\to\fff_P$ is not an
isomorphism.  Call a base point \emph{removable} if it isn't a base
point of $(\oo_C\langle V\rangle,V)$, where $\oo_C\langle V\rangle$ is the $\oo$-submodule of $\fff$ generated by $V$. We say $(\fff,V)$ is \emph{base point free} if it has no base points. If $\fff$ is invertible and it is generated by $V$, then the linear system induces a morphism which we write $(\fff,V): C\to\mathbb{P}^r$. If $(\fff,V)$ is complete, i.e., $V=H^0(\fff)$, then we write $(\fff,V)=|\fff|$.
\end{sbs}

\begin{sbs}[Canonical Model] \label{cm} Given a sheaf $\mathscr{G}$ on $C$, if $\varphi :\mathcal{X}\to C$ is a morphism from a scheme $\mathcal{X}$ to $C$, we set
$$
\oo_{\mathcal{X}}\mathscr{G}:=\varphi^* \mathscr{G}/\rm{Torsion}(\varphi^*\mathscr{G})
$$
and for each coherent sheaf $\fff$ on $C$ set 
$$
\fff^n:=\rm Sym^n\fff/\rm Torsion (\rm {Sym}^n\fff).
$$ 
Given a vector space $V \subset k(C)$ we also use the notation 
\begin{equation} \label{h0omegan}
    V^{n}:= \left \{\sum_{i=1}^{m} f_{i,1}\ldots f_{i,n} \,\bigg|\, f_{i,j} \in V, \   m\in \nn^{*} \right \}
\end{equation}
\begin{defi}
Consider the normalization map $\pi :\cb\rightarrow C$.
In \cite{R}*{p.\,188\,top} Rosenlicht showed that the linear
series $(\oo_{\cb}\ww,H^0(\ww))$
is base point free. 
He then considered the induced morphism $\psi :\cb\rightarrow{\mathbb{P}}^{g-1}$
and called its image $C':=\psi(\cb)$ the \emph{canonical model} of $C$.
\end{defi}

Rosenlicht also proved \cite{R}*{Thm.\,17}
that if $C$ is nonhyperelliptic, then the map $\pi :\cb\rightarrow C$ 
factors through a map $\pi' : C'\rightarrow C$. Set $\oo':=\pi'_*(\oo_{C'})$ in this case. Let $\widehat{C}:=\rm {Proj}(\oplus\,\ww ^n)$ be the blowup of $C$ along $\ww$ and
$\widehat{\pi} :\widehat{C}\rightarrow C$ be the natural morphism. 
Set $\widehat{\oo}=\widehat{\pi}_*(\oo_{\widehat{C}})$ and $\widehat{\oo}\ww:=\widehat{\pi}_*(\oo _{\widehat{C}}\ww)$. 
In \cite{KM}*{Dfn.\,4.9} one finds
another characterization of the canonical model $C'$, namely, it is the image of the morphism  
$\widehat{\psi}:\widehat{C}\rightarrow{\mathbb{P}}^{g-1}$ defined by the linear system 
$(\oo_{\widehat{C}}\ww,H^0(\ww))$. By Rosenlicht's Theorem, since $\ww$ is 
generated by global sections, one deduces that $\widehat{\psi}:\widehat{C}\rightarrow C'$ is an 
isomorphism if $C$ is nonhyperelliptic \cite{KM}*{Thm. 6.4}.

\medskip
The sheaf $\overline{\oo}\ww:=\pi_*(\oo_{\cb}\ww)$ can be generated by
the global sections of $\ww$, see \cite{R}*{p.\,188 top}.
Since there are only a finite number of singular
points on $C$ and the ground field is infinite, it follows that there is a
differential $\zeta\in H^0(\ww)$ such that 
$(\overline{\oo}\ww)_P=\zeta\cdot\overline{\oo}_P$ for every singular point $P\in C$, where
$\overline{\oo}:=\pi _{*}(\oo _{\cb})$. 


So we will assume, once and for all, and up to explicit mention in contrary, that $\ww$ is embedded  in the constant sheaf $\mathcal{K}$ of rational functions by means of $\zeta$. If so, for each singular point $P\in C$, there is a chain of inclusions
$$
\ccc_P\subset
\oo_P \subset \wwp \subset\widehat{\oo}_P=\op'\subset\obp
$$
where $\ccc:=\mathcal{H}\rm {om}(\overline{\oo},\oo)$ is the
the \emph{conductor} of $\overline{\oo}$ into $\oo$, and where the equality makes sense if and only if $C$ is nonhyperelliptic.

\begin{defi} \label{defnng}
Let $P\in C$ be any point. Set
$$
\eta_P:=\dim(\wwp/\op)\ \ \ \ \ \ \ \ \ \ \ \mu_P:=\dim({\widehat{\oo}_{P}}/\wwp)
$$
and also
$$
\eta:=\sum_{P\in C}\eta_P\ \ \ \ \ \ \ \ \ \ \mu:=\sum_{P\in C}\mu_P
$$
Following \cite{BF}*{pp. 418, 433, Prps. 21, 28} call $C$ \emph{Kunz} if $\eta=1$ and,
following \cite{KM}*{Dfn. 5.7}, call $C$ \emph{nearly Gorenstein} if  $\mu=1$. 
\end{defi}

 \begin{rem}
\label{remrel} 
The importance of the concepts above are summarized below:
\begin{itemize}
\item[(i)] $C$ is nearly Gorenstein if and only if it is non-Gorenstein and $C'$ is projectively normal, owing to \cite{KM}*{Thm. 6.5}. 
\item[(ii)] $P$ is Gorenstein iff $\eta_P=\mu_P=0$, and $P$ is non-Gorenstein iff $\eta_P,\mu_P>0$ owing to \cite{BF}*{p. 438 top}. Besides, if $\eta_P=1$ then $\mu_P=1$, by \cite{BF}*{Prp. 21}. In particular, a Kunz curve is as close to being Gorenstein as it gets.
\end{itemize}
\end{rem}
\end{sbs}

\begin{sbs}[Semigroup of Values] 
Now, let's fix some notation about valuations. Given a unibranch $P\in C$ and any function $x\in k(C)^*$, we denote
$$
v(x)=v_{P}(x):=v_{\pb}(x)\in\mathbb{Z}
$$
where $\pb\in\cb$ lies over $P$. The \emph{semigroup of values} of $P$ is
$$
\sss=\sss_P:=v_{P}(\op ).
$$
The set of \emph{gaps} of $\sss$ is
$$
{\rm G} :=\mathbb{N}\setminus\sss = \{\ell_1,\ldots,\ell_g\}.
$$
The value $\gamma:=\ell_g$ is called the \emph{Frobenius number} of $\sss$. We also feature two elements of $\sss$, namely:
\begin{equation}
\label{equaab}
\alpha :={\rm min}(\sss\setminus\{ 0\})\ \ \ \ \text{and}\ \ \ \beta :=\gamma+1.
\end{equation}
We have
$$
v(\ccc_P)=\{ s\in\sss\,|\, s\geq\beta\}
$$
Similarly, the invariant
$$
\delta :=\#({\rm G}) 
$$
agrees with the \emph{singularity degree} of $P\in C$, that is, 
\begin{equation}
\label{equdlt}
\delta=\dim(\obp/\op).
\end{equation} 
We define the set 
\begin{equation}
\label{equkkp}
{\rm K} :=\{ a\in\mathbb{Z}\ |\ \gamma -a\not\in\sss\}
\end{equation}
whose importance will be clear later on. Finally, given ${{\rm I}}$,${{\rm J}} \subset \mathbb{Z}$, set 
\begin{equation}\label{eqij}
    {{\rm I}}-{{\rm J}} := \{a \in \mathbb{Z}  |\ a + {{\rm J}} \subset I \}.
\end{equation}
\end{sbs}

\section{On the Ideal of the Canonical Model of a Non-Gorenstein Curve} 
\label{kcgc}
 
In this section we prove some results concerning the ideal $I(C')$ of the canonical model of a non-Gorenstein curve $C$. Rather than providing a description of the generators of $I(C')$, we are more interested on their degrees, as in \eqref{Kleiman}, and the dimension of its homogeneous components $I_n(C')$, as in \eqref{thmdim}. We start by recalling some results on Koszul cohomology.

\begin{sbs}[Koszul Cohomology]
Following \cite{Gr}*{p. 126}, let $V$ be a finite dimensional $k$-vector space. Let also $S(V)$ be the symmetric algebra, and $B=\bigoplus_{q \in \mathbb{Z}} B_{q}$ a graded $S(V)$-module. Consider the Koszul complex
\begin{equation}
   \ldots \rightarrow \bigwedge^{p+1}V \otimes B_{q-1} \xrightarrow[]{d_{p,q}}  \bigwedge^{p}V \otimes B_{q} \xrightarrow[]{d_{p+1,q-1}} \bigwedge^{p-1}V \otimes B_{q+1} \rightarrow \ldots.
\end{equation}
The \emph{Koszul cohomology groups} are defined by 
\begin{equation}
   K_{p,q}(B,V) = \dfrac{\ker(d_{p,q})}{\im(d_{p+1,q-1})}.
\end{equation}
For a free minimal resolution
\begin{equation}
\label{equrem}
   \ldots \rightarrow \bigoplus_{q \geq q_{1}} M_{1,q} \otimes S(V)(-q) \rightarrow  \bigoplus_{q \geq q_{0}} M_{0,q} \otimes S(V)(-q) \rightarrow B \rightarrow 0. 
\end{equation}
we have the \emph{Syzygy Theorem} \cite{Gr}*{Thm.~(1.b.4)}
\begin{equation} \label{KpqM}
   K_{p,q}(B,V) \simeq M_{p,p+q}(B,V).
\end{equation}
The integers $\beta_{i,j} := \dim M_{i,j}$ are called the \emph{Betti numbers} of the resolution \cite{E}*{p.~8}.

Now let $\fff$ be a torsion free sheaf of rank 1 on $C$. Taking $V = H^{0}(\fff)$ and $B = \bigoplus_{n \geq 0} H^{0}(\fff^{n})$, we set 
\begin{equation}
   K_{p,q}(C,\fff) := K_{p,q}(B,V)
\end{equation}
In other words, considering the complex
\begin{equation} \label{Koszul}
    \left( \bigwedge^{p+1}H^0(\fff) \right)\otimes H^0(\fff^{q-1}) \xrightarrow[]{\phi_{p,q}^1} \left( \bigwedge^{p}H^0(\fff) \right) \otimes H^0(\fff^{q}) \xrightarrow[]{\phi_{p,q}^2} \left(\bigwedge^{p-1}H^0(\fff) \right) \otimes H^0(\fff^{q+1})
\end{equation}
then
$$
K_{p,q}(C,\fff):= \ker(\phi_{p,q}^2)/{\rm im}(\phi_{p,q}^1)
$$

If $C$ is a non-hyperelliptic Gorenstein curve, then the above has the following geometric content. Setting $\mathbb{P}^{g-1} = \mathbb{P}(H^{0}(\omega))$ and noticing that $\omega^{n} = \oo_{C}(n)$, then \eqref{equrem} provides the resolution of $C$ viewed as a canonical curve. Moreover, if $C$ is non-Gorenstein, this extrinsic approach can still be preserved, as we will see next in \eqref{Kleiman}. Also, for an arbitrary non-Gorenstein curve, \eqref{Koszul} can also be seen from a purely algebraic point of view as considered in \cites{CFMt, GM}. For instance, it was proved in \cite{GM}*{p.~8} that if $C$ is non-hyperelliptic, then $K_{0,2}(C,\ww)=0$,  that is, Max Noether Theorem holds for any integral curve. It completed \cite{CFMt}*{Thm.~1}, where the same result was proved for curves whose non-Gorenstein points are bibranch at worst. 
\end{sbs}

\begin{sbs}[Clifford Index]
Let $\fff$ be a torsion-free sheaf of rank $1$ on $C$. The \emph{Clifford index} of $\fff$ is defined as follows
$$
     \cliff(\fff)  := \deg(\fff) - 2(h^{0}(\fff)-1).
$$
Accordingly, the \emph{Clifford index} of $C$ is
$$
     \cliff(C)  :=  \min \left\{ \cliff\,(\fff) \,|\, h^{0}(\fff) \geq 2\ \text{and}\ h^{1}(\fff)\geq 2 \right\}.
$$
Say $\fff$ \emph{contributes} to the Clifford index if $h^0(\fff)\geq 2$ and $h^1(\fff)\geq 2$. Say $\fff$ \emph{computes} the Clifford index if, besides, $\cliff(\fff)=\cliff(C)$.
\end{sbs}

\begin{sbs}[Green's Conjecture] 
In \cite{Gr}*{Conj.~(5.1)}, it was stated the \emph{Noether-Enriques-Petri conjecture} nowadays known as \emph{Green's conjecture}: if $C$ is smooth, then 
\begin{equation} \label{conjgreen}
    K_{p,1}(C,\omega) \neq 0
\Longleftrightarrow
C \ \text{has a} \ g_{d}^{r} \ \text{with} \ d \leq g-1, \ r \geq 1, \ d-2r \leq g-2-p.
\end{equation}
Using duality \cite{Gr}*{Thm.~(4.c.1)}, we may write \eqref{conjgreen} as 
$$
K_{p,2}(C,\omega) = 0 \Longleftrightarrow C \ \text{has no} \ g_{d}^{r} \ \text{with} \ d \leq g-1, \ r \geq 1, \ d-2r \leq p.
$$
which can be rephrased as 
\begin{equation} \label{green2}
    K_{p,2}(C,\omega)=0 \Longleftrightarrow \cliff(C) > p.
\end{equation}
as it is likely the way the conjecture is most known, as it was said in the Introduction. But note that the statement also makes sense for an arbitrary $C$. In fact, in \cite{CFMt}*{Thm.\,3.(iv)} one finds a family of singular curves which violates Green's Conjecture at any For $p$. 

\begin{prop} \label{Kleiman}
Let $C$ be a nearly Gorenstein curve. Then 
\begin{itemize}
    \item[(i)] $K_{1,2}(C,\ww) = 0$ if and only if $C'$ is cut out by quadrics; 
    \item[(ii)] $C'$ is cut out by quadrics and cubics; 
    \item[(iii)] if $C$ is not Kunz, then $C'$ is cut out by quadrics. 
\end{itemize}
\end{prop}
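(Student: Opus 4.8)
The plan is to exploit the dictionary between Koszul cohomology of $\ww$ and the minimal free resolution of the coordinate ring of $C'$, together with the hypothesis that $C$ is nearly Gorenstein so that $C'$ is projectively normal (Remark \ref{remrel}(i)). Since $C$ nearly Gorenstein forces $C'$ to be linearly normal and projectively normal, the homogeneous coordinate ring of $C'$ coincides with $B=\bigoplus_{n\geq 0}H^0(\ww^n)$, and so the Syzygy Theorem \eqref{KpqM} identifies $K_{p,q}(C,\ww)$ with the graded piece $M_{p,p+q}$ of a minimal resolution. In particular $K_{1,2}(C,\ww)\cong M_{1,3}$, which records exactly the cubic minimal generators of the ideal $I(C')$. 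The first step is therefore to record this translation carefully: the vanishing $K_{1,2}(C,\ww)=0$ means there are no minimal generators of $I(C')$ in degree $3$, while $K_{1,q}(C,\ww)$ for all $q\geq 2$ governs generators in degree $q+1$.

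For part (i), the plan is to argue that since $C'$ is projectively normal and arithmetically Cohen--Macaulay of codimension $g-2$ in $\mathbb P^{g-1}$, minimal generators of $I(C')$ occur only in degrees $2$ and higher, and the degree-$q+1$ generators are counted by $M_{1,q+1}\cong K_{1,q}(C,\ww)$. Thus $C'$ is cut out by quadrics precisely when $I(C')$ has no minimal generators in degree $\geq 3$; the crucial reduction is that for a projectively normal canonical-type curve the only possible higher-degree generators are cubics, so that $I(C')$ being generated by quadrics is equivalent to $K_{1,2}(C,\ww)=0$. I would make this reduction precise by invoking the fact that $\ww^n$ is generated in the expected degrees and that the Betti table of a projectively normal ACM curve of this type is concentrated in the first linear strand except possibly for a single cubic contribution, which is the content being rephrased from \cite{KM}*{Thm.~6.4}.

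For parts (ii) and (iii), the strategy is to bound from above the top degree in which $I(C')$ can have a minimal generator. Part (ii) asserts there are no minimal generators beyond degree $3$; I would prove this by showing $K_{1,q}(C,\ww)=0$ for all $q\geq 3$, equivalently $M_{1,q+1}=0$ for $q+1\geq 4$. This should follow from a Castelnuovo--Mumford regularity estimate for $C'$: a projectively normal curve whose dualizing sheaf is the hyperplane class has regularity at most $3$, which caps the generator degrees at $3$. For part (iii), the plan is to show that when $C$ is not Kunz (so $\eta_P\geq 2$ at the non-Gorenstein point), the cubic Betti number $M_{1,3}=K_{1,2}(C,\ww)$ vanishes, whence by (ii) the ideal is generated by quadrics alone, and by (i) this is equivalent to the stated conclusion. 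The argument here should track how the local invariant $\eta_P$ controls whether a genuine cubic relation is forced: the Kunz case $\eta_P=1$ is exactly the borderline where a cubic generator can appear, and $\eta_P\geq 2$ should provide enough global sections of $\ww^2$ to express every cubic relation as a combination of quadratic ones.

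The main obstacle I expect is part (iii): establishing the sharp dichotomy that a cubic generator of $I(C')$ exists if and only if $C$ is Kunz. The inequality $K_{1,2}(C,\ww)\neq 0 \Rightarrow \eta_P=1$ requires a careful local-to-global analysis near the non-Gorenstein point, relating the failure of quadrics to cut out $C'$ to the precise value of $\dim(\wwp/\op)$. Controlling the rank of the multiplication map $\mathrm{Sym}^2 H^0(\ww)\to H^0(\ww^2)$ and its higher analogues in terms of these local invariants — rather than just dimension counts — is the delicate point, and I would expect to need the explicit structure of $\wwp$ and the conductor at the singular point, as prepared in \eqref{defnng} and the chain of inclusions $\ccc_P\subset\oo_P\subset\wwp\subset\widehat\oo_P=\op'\subset\obp$.
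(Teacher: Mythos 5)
Your treatment of parts (i) and (ii) follows essentially the paper's route: near-Gorensteinness gives projective normality of $C'$ (Remark \eqref{remrel}(i)), so that $B=\bigoplus_{n\ge 0}H^0(\ww^n)$ is the homogeneous coordinate ring of $C'$; the Syzygy Theorem \eqref{KpqM} identifies $K_{1,2}(C,\ww)$ with the space $M_{1,3}$ of cubic minimal generators of $I(C')$; and a Castelnuovo--Mumford regularity bound $\mathrm{reg}(\mc I)\le 3$ caps the generator degrees at $3$, which gives (ii) and reduces (i) to (ii). One inaccuracy in your sketch of (ii): on the canonical model of a non-Gorenstein curve the hyperplane class is \emph{not} the dualizing sheaf; rather $\deg\oo_{C'}(1)=2g'+\eta>2g'-2$ by \cite{KM}*{Prp.~2.14}, and it is precisely this excess that yields $H^1(\oo_{C'}(1))=0$ in the regularity argument, so the slip is harmless but the phrasing is wrong.

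The genuine gap is part (iii), which you yourself flag as the ``main obstacle'' and leave as a plan rather than a proof. You propose a delicate local-to-global analysis showing that $\eta_P\ge 2$ forces $K_{1,2}(C,\ww)=0$, and you even aim at the sharp dichotomy ``a cubic generator exists if and only if $C$ is Kunz'' --- which is more than the statement asks: only the implication ``not Kunz $\Rightarrow$ quadrics suffice'' is claimed, and the converse is established in the paper only for unicuspidal monomial curves, in \eqref{EBmonomial}. The missing idea is much simpler and entirely global: if $C$ is not Kunz then $\eta\ge 2$, hence $\deg\oo_{C'}(1)=2g'+\eta\ge 2g'+2$, and a curve embedded by a complete linear system of degree at least $2g'+2$ is cut out by quadrics by Fujita's theorem \cite{F2}*{Cor.~1.14}. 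No analysis of multiplication maps near the singular point is needed; the local invariant $\eta$ enters only through the degree of the embedding. Without this (or an equivalent) input, your outline of (iii) does not constitute a proof: nothing in your sketch explains why $\eta_P\ge 2$ actually produces the sections needed to kill all cubic generators, and the borderline behaviour at $\eta=1$ is invisible to the kind of dimension counts you suggest.
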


\begin{proof}
\textbf{To prove (i)}, write $\bb{P}^{g-1} =\bb{P}(H^{0}(\ww))$. As $C$ is nearly Gorenstein, then the proof of \cite{Mt}*{Thm.~2.7} and \cite{Mt}*{Rem.~2.8} yield $H^{0}(\ww^{n}) = H^{0}(\oo_{C'}(n))$ and also yield $C'={\rm Proj} \bigoplus_{n \geq 0} H^{0}(\ww^{n})$. Then, first, $K_{p,q}(C,\ww) = K_{p,q}(C',\oo_{C'}(1))$, and, also, \eqref{equrem} gives the resolution of $C'$. Thus $K_{1,2}(C,\ww)=0$ if and only if $K_{1,2}(C',\oo_{C'}(1))=0$. But by definition, $K_{1,2}(C',\oo_{C'}(1))=K_{1,2}(B,V)$, where $B = H^{0}(\oo_{C'}(1))$ and $V = \bigoplus_{n\geq 0} H^{0}(\oo_{C'}(n))$. Now $K_{1,2}(B,V) = M_{1,3}(B,V)$, by \eqref{KpqM}. But $M_{1,3}(B,V) = 0$ means we need no cubics to cut out $C'$. So if we prove (ii), then (i) follows.

\textbf{To prove (ii)}, let $\mc{I}$ be the ideal sheaf of $C'$ in $\mathbb{P}^{g-1}$. We will show that the Castelnuovo-Mumford regularity \cite{M2}*{Prop.~p.~99} of $\mc{I}$ is at most $3$, that is, $H^q(\mc
I(3-q))=0$ for $q\ge1$. We follow the argument in the proof of \cite{KM}*{Thm.~6.5}. Consider the exact sequence 
\begin{equation} \label{exseq}
    0 \rightarrow \mc{I} \rightarrow \mc O_{\bb P^{g-1}} \rightarrow \mc O_{C'} \rightarrow 0.
\end{equation}
For $q=1$, take the following exact sequence in cohomology induced by \eqref{exseq},
$$
    H^{0}(\mc O_{\bb P^{g-1}}(1)) \stackrel{\alpha}{\longrightarrow} H^{0}(\mc O_{C'}(1)) \longrightarrow
 H^1(\mc I(1)) \stackrel{\beta}{\longrightarrow}  H^1(\mc O_{\bb P^{g-1}}(1)).
$$
As $C$ is nearly Gorenstein, $C'$ is projectively normal \eqref{remrel}(i). Therefore, $\alpha$ is surjective, thus $\beta$ is injective. But $g \geq 3$, thus $H^{1}(\oo_{\bb{P}^{g-1}}(1))=0$. Hence $H^{1}(\mc{I}(1))=0$. For $q \geq 2$, consider the exact sequence in cohomology induced by \eqref{exseq}, 
$$
 H^{q-1}(\mc O_{C'}(3-q))\to
 H^q(\mc I(3-q))\to H^q(\mc O_{\bb P^{g-1}}(3-q)).
$$
For $q=2$, we have that $H^{1}(\mc O_{C'}(1))=0$ because $\deg (\oo_{C'}(1)) = 2g'+\eta>2g'-2$, where the equality holds by \cite{KM}*{Prp.\,2.14}. For $q \geq 3$, we have $H^{q-1}(\oo_{C'}(3-q))=0$, because $C'$ is a curve. On the other hand, if $q \neq g$ then it is easily seen that $H^{q-1}(\oo_{\bb{P}^{g-1}}(3-q))=0$; otherwise, $H^{q-1}(\oo_{\bb{P}^{q-1}}(3-q))=H^{0}(\oo_{\bb{P}^{q-1}}(-3))=0$. Therefore, $H^{q}(\mc{I}(3-q))=0$ for $q \geq 2$ as well. This proves (ii).

\textbf{To prove (iii)}, if $C$ is not Kunz, then $\eta\geq 2$; thus $\deg(C')\geq 2g'+2$, and (iii) follows by \cite{F2}*{Cor.~1.14}.
\end{proof}
\end{sbs}

\begin{sbs}[Dimensional Counts for the Ideal of the Canonical Model]
To begin with, we start by a general result.

For any $P\in C$, set
$$
\sigma_{n,P}:=
\begin{cases}
\dim(\ww_P^n/\ww_P) &\mbox{if } P \ \text{is non-Gorenstein,} \\ 
0 & \mbox{otherwise}
\end{cases}
$$
and let $\sigma_n:=\sum_{P\in C} \sigma_{n,P}$. Let also
$$
I_n(C'):=\{\text{hypersurfaces of degree $n$ in $\mathbb{P}^{g-1}$ containing $C'$}\}.
$$
With this in mind we prove the following result.

\begin{thm}
\label{thmdim}
If $C$ is non-hyperelliptic and $n\geq 2$. Then
$$
\dim(I_n(C'))=\binom{g+n-1}{n}-n(2g-2)+(n-1)\eta-\sigma_{n}-1+g.
$$
\end{thm}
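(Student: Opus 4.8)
The plan is to pass through the restriction sequence and reduce the whole statement to the dimension of the $n$-th power $H^0(\ww)^n$, in the sense of \eqref{h0omegan}. Writing $\bb P^{g-1}=\bb P(H^0(\ww))$, the space of degree-$n$ forms is ${\rm Sym}^n H^0(\ww)$, of dimension $\binom{g+n-1}{n}$, and I would consider the restriction map ${\rm Sym}^n H^0(\ww)\to H^0(\oo_{C'}(n))$. Its kernel is $I_n(C')$ by definition, while its image $R_n$ is the span of the products $f_1\cdots f_n$ with $f_i\in H^0(\ww)$, that is $R_n=H^0(\ww)^n$; since each such product is a section of $\ww^n$, we have $R_n\subseteq H^0(\ww^n)$. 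Hence $\dim I_n(C')=\binom{g+n-1}{n}-\dim H^0(\ww)^n$, and the theorem is equivalent to the identity $\dim H^0(\ww)^n=n(2g-2)-(n-1)\eta+\sigma_n+1-g$.

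Next I would establish the Riemann--Roch side, namely that the right-hand expression equals $h^0(\ww^n)$. The key input is the degree $\deg\ww^n$, which I would obtain by a local length count: from $\op\subseteq\ww_P\subseteq\ww_P^n$ and \eqref{eqdega} one gets $\deg\ww^n-\deg\ww=\sum_P\dim(\ww_P^n/\ww_P)$, where the contribution is $\sigma_{n,P}$ at each non-Gorenstein point by definition and is $(n-1)$ times the local contribution to $\deg\ww$ at the Gorenstein points. Summing and using $\deg\ww=2g-2$ together with $\eta=\sum_P\eta_P$ yields $\deg\ww^n=n(2g-2)-(n-1)\eta+\sigma_n$. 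Since this exceeds $2g-2$ for $n\ge 2$, the sheaf $\mathcal{H}{\rm om}(\ww^n,\ww)$ is torsion-free of rank one and negative degree on an integral curve, so $h^1(\ww^n)=h^0(\mathcal{H}{\rm om}(\ww^n,\ww))=0$ by duality; therefore $h^0(\ww^n)=\chi(\ww^n)=\deg\ww^n+1-g$ is exactly the desired expression. The one delicate point is the bookkeeping forced by the embedding of $\ww$ into $\mathcal K$ via $\zeta$, which must be arranged so that the Gorenstein points contribute correctly to the two sums.

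The heart of the argument, and the step I expect to be the main obstacle, is proving that the inclusion $H^0(\ww)^n\subseteq H^0(\ww^n)$ is an equality for every $n\ge 2$, equivalently that ${\rm Sym}^n H^0(\ww)\to H^0(\ww^n)$ is surjective. This is a Max Noether type statement for an arbitrary non-hyperelliptic integral curve in all degrees; I would stress that it is strictly weaker than projective normality of $C'$ (which holds only in the nearly Gorenstein case), because $H^0(\ww^n)\subseteq H^0(\oo_{C'}(n))$ in general. I would argue by induction on $n$: the case $n=2$ is the generalized Max Noether theorem $K_{0,2}(C,\ww)=0$ of \cite{GM}, and for $n\ge 3$ I would factor the map through the multiplication $H^0(\ww)\otimes H^0(\ww^{n-1})\to H^0(\ww^n)$ and establish its surjectivity. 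The latter is a Castelnuovo--Mumford style vanishing: since $\ob\ww$ is globally generated by $H^0(\ww)$ by Rosenlicht's theorem and $\ww^{n-1}$ is non-special of large degree, the obstruction to surjectivity lies in an $H^1$ that vanishes. The genuine difficulty is adapting this base-point-free and vanishing argument to the torsion-free, non-locally-free behaviour of $\ww$ at the non-Gorenstein points, where $\ww^{n-1}$ and $\ww^n$ differ from honest tensor powers.

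Finally I would assemble the pieces: the reduction gives $\dim I_n(C')=\binom{g+n-1}{n}-\dim H^0(\ww)^n$, the surjectivity gives $\dim H^0(\ww)^n=h^0(\ww^n)$, and the Riemann--Roch computation gives $h^0(\ww^n)=n(2g-2)-(n-1)\eta+\sigma_n+1-g$; combining the three yields the stated formula.
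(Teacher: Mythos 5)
Your proposal follows essentially the paper's own proof: the identification $I_n(C')=\ker\varphi_n$ for the multiplication map $\varphi_n\colon {\rm Sym}^n H^0(\ww)\to H^0(\ww^n)$ (which, as in the paper, relies on non-hyperellipticity so that $C'$ is birational to $C$ and products of sections can be read inside $k(C)$), then surjectivity of $\varphi_n$, then Riemann--Roch together with $h^1(\ww^n)=0$ and the degree formula $\deg(\ww^n)=n(2g-2-\eta)+\eta+\sigma_n$. The only real divergence is the step you single out as the main obstacle: the paper does not run your induction on $n$ at all, but instead invokes the singular Max Noether theorem of \cite{GM} at full strength, namely that $\varphi_n$ is surjective for \emph{every} $n\geq 1$, rather than only the case $K_{0,2}(C,\ww)=0$ (i.e.\ $n=2$) that you allow yourself. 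So the Castelnuovo--Mumford style induction you sketch --- and correctly identify as delicate because $\ww^{n}$ is not an honest tensor power at non-Gorenstein points --- can simply be replaced by that citation; as written, it is the one incomplete step in your argument, and it is precisely the content of the result the paper quotes. One small correction to your degree bookkeeping: the chain $\op\subseteq\ww_P\subseteq\ww_P^n$ holds at the singular points, but at a smooth point where the embedding differential $\zeta$ vanishes the inclusion reverses ($\ww_P^n\subseteq\ww_P$), so $\deg\ww^n-\deg\ww$ is not a sum of nonnegative lengths $\dim(\ww_P^n/\ww_P)$; this is exactly why the paper splits the computation over the Gorenstein locus $U$, where $\deg_U(\ww^n)=n\deg_U(\ww)=n(2g-2-\eta)$, and its complement, where the contribution is $\eta+\sigma_n$. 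Your final total agrees with this, so the slip is only in the justification, not in the formula.
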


\begin{proof}
Recall that $C'$ is defined as follows: the linear series $(\oo_{\cb}\ww,H^0(\ww))$ yields a morphism $\psi :\cb\rightarrow{\mathbb{P}}^{g-1}=\mathbb{P}(H^0(\ww))$ and $C':=\psi(\cb)$. Now consider the natural morphisms
$$
\varphi_n: {\rm Sym}^n H^0(\ww) \longrightarrow H^0(\ww^n)
$$
for $n\geq 1$. Since $C$ is non-hyperelliptic, then $C'$ is birationally equivalent to $C$. Thus the very definition of $C'$ yields that $I_n(C')=\ker(\varphi_n)$, for $n\geq 1$.

Now, by the singular version of Noether Theorem, proved in \cite{GM}, we have that $\varphi_{n}$ is surjective for any $n\geq 1$. Therefore for $n \geq 2$,
\begin{align*}
\dim(I_n(C'))&=\dim({\rm Sym}^n H^0(\ww))-h^0(\ww^n)\\
             &=\dim({\rm Sym}^n H^0(\ww))-(\deg(\ww^{n})+1-g) \\
             &=\dim({\rm Sym}^n H^0(\ww))-(n(2g-2-\eta)+\eta+\sigma_{n}+1-g) \\
             &=\binom{g+n-1}{n}-n(2g-2)+(n-1)\eta-\sigma_{n}-1+g
\end{align*}
where the second equality owes to Riemmann-Roch and the fact that $h^{1}(\ww^{n})=0$ for $n\geq 2$ since $\deg(\ww^{n})>2g-2=\deg(\ww)$. For the third equality, let $U$ be the largest open set where $\ww$ is a bundle, that is, where the points of $C$ are Gorenstein; then, note that $\deg_{C\setminus U}(\ww)=\eta$, and hence $\deg_{U}(\ww)=2g-2-\eta$; thus it is straightforward that $\deg_{C\setminus U}(\ww^n)=\eta+\sigma_n$ while $\deg_{U}(\ww^n)=n(2g-2-\eta)$.
\end{proof}
\end{sbs}

\section{The Ideal of the Canonical Model of Monomial Curves} \label{icmmc}

In this section, we prove Theorem 1.(II) and Theorem 2, stated in the Introduction. We start by defining \emph{exceptional triples} of gaps of a given semigroup, very inspired by the \emph{exceptional monomials} introduced by St\"ohr in \cite{St1}*{p.\,196}.

\begin{defi} \label{excmon}
Let ${\rm A} \subset \mathbb{N}^{*}$. We say $(b_{1}, b_{2}) \subset {\rm A}^{2}$ is \emph{minimal} with respect to ${\rm A}$ if it is the smallest partition, in the lexicographic order, of $b_{1}+b_{2}$ by elements of ${\rm A}$. Now let $\sss$ be a semigroup and $\gap$ be its set of gaps. 
We say that an increasingly ordered $n$-tuple $(b_{1}, b_{2}, b_{3})\subset \gap^3$ is \emph{exceptional} if, for every $i\neq j \in \{1, 2, 3 \}$, we have that $(b_{i}, b_{j})$ is minimal with respect to $\gap$ and $b_{i}\neq 1,\gamma$.
\end{defi}

For the next result, we need the following, call . 

\begin{defi} \label{Kunz}
Let $\sss$ be a numerical semigroup. Rephrasing Barucci-Fr\"oberg \cite{BF}*{pg.~420, top}, call $\sss$ pseudo-symmetric if $\kk=\sss\cup\{\gamma/2,\gamma\}$. In particular, $\gamma$ is even (and the conductor number $\beta$ is odd).
\end{defi}


\begin{lem} \label{exclem}
Let $\sss=\{s_0=0 < s_1=\alpha < s_2 < \ldots < s_n <\ldots \}$ be a semigroup. Let also $\tau$ be the largest integer such that $s_{\tau}=\tau\alpha$ and $m$ the largest integer such that $S_m:=[\gamma-m\alpha+1,\gamma-1-(m-1)\alpha]\subset \sss$. Then the exceptional triples are
\begin{itemize}
\item[(i)] $(a_{1},\alpha-1,a_{3})$ with $a_{1}\geq 2$, $[\alpha+1,\alpha+a_1-2] \subset \sss$,

 \ \ \ \ \ \ \ \ \ \ \ \ \ \ \ \ \ \ \ \  $[a_{3}+1, a_{3}+\alpha-2]\subset \sss$, and $[a_{3}+1, a_{3}+a_1-1] \subset \sss$;
\item[(ii)] $(2,k\alpha-1,\gamma-(m+1)\alpha+1) \ \text{with} \ m\geq 1 \ ,k \in [2,m+1]$;

\ \ \ \ \ \ \ \ \ \ \ \ \ \ \ \ \ \ \ \ \ \ \ \ \ \ \ \ \ \ \ \ \ \ 
$[\gamma-(m+1)\alpha+2,\gamma-(m+1)\alpha+\alpha-1] \subset \sss$;
\item[(iii)] $(a_{1},a_{2},\gamma - a_2) \ \text{with} \ a_{1}<\alpha, \ a_{2}>\tau\alpha$,

\ \ \ \ \ \ \ \ \ \ \ \ \ \ \ \ \ \ \ \ \ \ 
$a_{2} \text{ is the smallest    gap such that } \gamma-a_{2} \in \gap$, $\text{and}$ 

\ \ \ \ \ \ \ \ \ \ \ \ \ \ \ \ \ \ \ \ \ \ 
$[a_{i}+1, a_{i}+a_1-1] \subset \sss, \ i=2,3$;
\item[(iv)] $(k\alpha+r,\gamma/2,\gamma/2)$ with $1 \leq k \leq m-1$, $0 < r < \alpha$, $\gamma=2(m\alpha+r)$ and

\ \ \ \ \ \ \ \ \ \ \ \ \ \ \ \ \ \ \ \ \ \
$\sss$ is pseudo-symmetric.
\item[(v)] $(\gamma/2,\gamma/2,\gamma/2)$ and $\sss$ is pseudo-symmetric.
\end{itemize}
\end{lem}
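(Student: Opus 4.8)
The plan is to convert the minimality requirement into explicit membership statements in $\sss$, and then run a case analysis on the smallest gap $b_1$, treating the appearance of $\gamma/2$ separately. First I would record the reformulation used throughout: for gaps $c\le d$ with $s:=c+d$, the pair $(c,d)$ is minimal with respect to $\gap$ precisely when $s-a\in\sss$ for every gap $a<c$ (a value exceeding $\gamma$ being counted in $\sss$). I would then assemble the elementary facts the analysis rests on: $1,\dots,\alpha-1$ are gaps; every $\gamma-j\alpha\ge 1$ is a gap, since otherwise $\gamma\in\sss$; the integer $\tau$ records the length of the initial run of multiples of $\alpha$, so $\sss\cap[0,\tau\alpha]=\{0,\alpha,\dots,\tau\alpha\}$; and $m$ locates the lowest block $S_m$ of $\alpha-1$ consecutive elements of $\sss$ lying just below $\gamma-(m-1)\alpha$. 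Feeding the reformulation into each of the three pairs of a triple turns ``exceptional'' into three explicit families of conditions ``$[\cdot,\cdot]\subset\sss$'', which are exactly those recorded in (i)--(v).

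Since $b_1\ne 1$ we have $b_1\ge 2$. I would first treat $b_1<\alpha$, so that $1,\dots,b_1-1$ all lie below $b_1$ and minimality of the two pairs through $b_1$ immediately forces runs of the form $[\cdot]\subset\sss$ just under $b_1+b_2$ and $b_1+b_3$. Tracking these together with minimality of $(b_2,b_3)$ separates three possibilities: $b_2=\alpha-1$, which reproduces the interval conditions of (i); $b_1=2$ with $b_2=k\alpha-1$, where locating $b_3$ as the obstruction $\gamma-(m+1)\alpha+1$ to $S_{m+1}\subset\sss$ gives (ii); and the reflected top pair $b_3=\gamma-b_2$ with $b_2<\gamma/2$, where minimality of $(b_1,b_2)$ and of $(b_2,b_3)$ force $b_2$ to be the least gap with $\gamma-b_2\in\gap$ and $b_2>\tau\alpha$, giving (iii).

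Next I would handle any triple containing $\gamma/2$, which is where $b_1\ge\alpha$ lands. If $b_2=b_3=\gamma/2$ then $(\gamma/2,\gamma/2)$ is minimal, so $\gamma-a\in\sss$ for every gap $a<\gamma/2$; moreover if some gap $a>\gamma/2$ had $\gamma-a$ a gap, then $(\gamma-a,a)$ would be a lexicographically smaller gap-partition of $\gamma$, a contradiction, so $\gamma-a\in\sss$ for every gap $a\ne\gamma/2$. Since $a,\gamma-a\in\sss$ would force $\gamma\in\sss$, at most one of $a,\gamma-a$ lies in $\sss$; together these say that exactly one of $a,\gamma-a$ lies in $\sss$ for each $a\ne\gamma/2$ while $\gamma/2\in\gap$, i.e.\ $\sss$ is pseudo-symmetric in the sense of \eqref{Kunz}. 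The admissible $b_1$ are then pinned down by minimality of $(b_1,\gamma/2)$ and the location of the blocks $S_j$: either $b_1=\gamma/2$, giving (v), or $b_1=k\alpha+r$ with $\gamma=2(m\alpha+r)$ and $1\le k\le m-1$, giving (iv). Conversely, each family in (i)--(v) is checked to satisfy all three minimality conditions directly, which is routine once the intervals are matched.

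The principal obstacle is exhaustiveness. The essential point is to show that $b_1\ge\alpha$ forces $b_2=b_3=\gamma/2$ rather than some unlisted interior configuration; I would prove this by a descent argument, using the gaps $\gamma-j\alpha$ lying below $b_1$ to manufacture a strictly smaller gap-partition of one of the three sums whenever the triple is not of the pseudo-symmetric shape, contradicting minimality. The second delicate point is the arithmetic near $\gamma$ that yields the exact parameter ranges---$k\in[2,m+1]$ in (ii) and $1\le k\le m-1$, $0<r<\alpha$ in (iv)---where I must verify that the stated ranges are precisely those for which all three interval conditions hold, neither more nor fewer; this is the computation-heavy step and requires careful tracking of residues modulo $\alpha$. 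Finally I would confirm that, once the extremal conditions built into each family are imposed, the five families are genuinely distinct, so that the list is a true classification rather than merely a covering.
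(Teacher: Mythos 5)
Your proposal follows essentially the same route as the paper's proof: the same combinatorial reformulation of minimality (the paper's \eqref{propminimal}, of which your one-directional version is the careful form), followed by a case analysis on the smallest entry of the triple relative to $\alpha$ and $\tau\alpha$, with the pseudo-symmetric configurations $(k\alpha+r,\gamma/2,\gamma/2)$ and $(\gamma/2,\gamma/2,\gamma/2)$ extracted exactly as in the paper's cases 4 and 5. The exhaustiveness step you single out as the principal obstacle --- showing that a smallest entry $\geq\alpha$ forces the $\gamma/2$ configuration and pseudo-symmetry --- is precisely what the paper carries out directly from the minimality conditions in those cases, so your plan is sound and matches the paper's argument.
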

\begin{proof}
Let $(b_1,b_2)\in\gap^2$ be a pair. Note that
\begin{equation} \label{propminimal}
(b_1,b_2) \ \text{is minimal} \  \Longleftrightarrow \ (b_{1}-i \in\gap \Longleftrightarrow b_2+i \not\in\gap, \ \forall \ 1\leq i \leq b_1-1).
\end{equation}
Also, 
assume $(a_{1},a_{2},a_{3})$ is exceptional.

\noindent {\bf case 1:} $2\leq a_1 \leq a_{2} \leq \alpha-1$.

\noindent As $(a_{1},a_{2})$ is minimal, then \eqref{propminimal} implies that $a_{2}=\alpha-1$ and $[\alpha, \alpha+a_{1}-2] \subset \sss$.
Also, $(a_{1},a_{3})$ and $(\alpha-1,a_3)$ are minimal too. So it follows, respectively, that $[a_{3}+1, a_{3}+a_{1}-1] \subset \sss$ and $[a_{3}+{1}, a_{3}+\alpha-2] \subset \sss$ by \eqref{propminimal}. This yields (i).  

\noindent {\bf case 2:} $2\leq a_1 < \alpha$, and $\alpha < a_{2} < \tau\alpha$.

\noindent As $(a_1,a_2)$ is minimal, \eqref{propminimal} yields $a_1=2$ and $a_2=k\alpha-1$ for some $k\leq\tau$. Now note that if a block $B$ of $\alpha-1$ integers is in $\sss$, so is $B+n\alpha$, for every $n\geq 1$. Thus, as $(k\alpha-1,a_3)$ is minimal, we must have $a_{3}=\gamma-(m+1)\alpha+1$, and necessarily $k\leq m+1$, $k \in [2,m+1]$, and $[\gamma-(m+1)\alpha+2,\gamma-(m+1)\alpha+\alpha-1] \subset \sss$. This yields (ii).


\noindent {\bf case 3:} $2\leq a_1 < \alpha$, and $a_{2} > \tau\alpha$.

\noindent Note that if $(a_2,a_3)$ is minimal, there is no other possibility than $m=\tau$, $a_3=\gamma-a_2$, and for any $n\in\{1,\ldots,a_3-\tau\alpha\}$, we have $a_3+i\in \sss$ if and only if $a_2-i\in\gap$. In particular, $a_{2}$ is the smallest gap such that $\gamma-a_2\in \gap$. Now $(a_1,a_2)$ and $(a_1,a_3)$ are also minimal and hence 
$[a_{i}+1, a_{i}+a_1-1] \subset \sss$, for $i=2,3$. This yields (iii).

\noindent {\bf case 4:} $\alpha<a_1<\tau\alpha$.

\noindent Write $a_1=k\alpha+r$ with $0<r<\alpha$. Assume $a_2<\tau\alpha$. As $(a_1,a_2)$ is minimal, \eqref{propminimal} yields either: (a) $r=1$ and $a_2=\tau\alpha-2$ or, else, (b) $a_2=\tau\alpha-1$. Assume (a). Then, as $(a_1,a_{3})$ is minimal, we get $a_{3}+1 \in \gap$. On the other hand, as $(a_{2},a_{3})$ is minimal, we get $a_{3}+1 \in \sss$. So (a) is precluded. Now assume (b). Then, as $(a_{2},a_{3})$ is minimal, we get $a_{3}=\gamma-(m+1)\alpha+1$ and $[\gamma-(m+1)\alpha+2,\gamma-m\alpha-1] \subset \sss$. Now, as $(a_{1},a_{3})$ is minimal, then $r=\alpha-1$. Further, as $(a_{1},a_{2})$ is minimal, there is no possibility left other than $a_{3}=a_{2}=\gamma/2$ and $\sss$ is pseudo-symmetric. Then, as $a_{3}=\gamma/2$ and $r=\alpha-1$ it follows that $\gamma=2(m\alpha+r)$.

Otherwise, assume $a_{2}>\tau\alpha$. As $(a_2,a_3)$ is minimal, case 3 yields $m=\tau$, $a_3=\gamma-a_2$, and for any $n\in\{1,\ldots,a_3-\tau\alpha\}$, we have $a_3+i\in \sss$ if and only if $a_2-i\in\gap$. Now, write $a_{1}=k\alpha+r$, with $0 < r < \alpha$. As $(a_1,a_3)$ is minimal, then $a_{3} = \gamma-m\alpha-r$. As $(a_{1},a_{2})$ is minimal, again we have that there is no possibility left other than $a_{3}=a_{2}=\gamma/2$ and $\sss$ is pseudo-symmetric. Then, as $a_{3}=\gamma/2$, it follows that $a_{2}=\gamma-m\alpha+r$. Now, we impose the condition $k \leq m-1$ for $a_{1}$ to be different than $a_{2}$ and $a_{3}$. This yields (iv).

\noindent {\bf case 5:} $a_1>\tau\alpha$.

\noindent Replacing $(a_2,a_3)$ by $(a_1,a_2)$ in case 3 yields $m=\tau$, $a_2=\gamma-a_1$, and  $a_{1}$ is the smallest gap such that $\gamma-a_1\in \gap$. But as $a_2+a_3\leq \gamma$. This yields $a_1=a_2=a_3=\gamma/2$ and $\sss$ is pseudo-symmetric. This yields (v).
\end{proof}
\begin{sbs}[The Ideal of the Canonical Model of Nearly Gorenstein Curves]
Now we want to describe the ideal of nearly Gorenstein unicuspidal monomial curves. To begin with, we start by characterizing such a curve in terms of its semigroup of values. 
\begin{lem} \label{ngumc}
Let $C$ be a unicuspidal monomial curve with semigroup $\sss$. Let $\kk$ be as in \eqref{equkkp}, and $\gamma$ be the Frobenius number. Then $C$ is nearly Gorenstein if and only if $\langle \kk \rangle = \kk \cup \{\gamma\}$, where $\langle \kk \rangle$ is the semigroup generated by $\kk$. Also, $C$ is Kunz if and only if $\sss$ is pseudo-symmetric.
\end{lem}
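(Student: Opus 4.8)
The plan is to reduce both equivalences to elementary arithmetic in the semigroup $\sss$ by computing the value sets of the two modules that govern $\eta$ and $\mu$. Since $C$ is unicuspidal with singular point $P$ and every other point is smooth, hence Gorenstein and contributing nothing to $\eta$ or $\mu$ by \eqref{remrel}(ii), we have $\eta=\eta_P$ and $\mu=\mu_P$. As $C$ is monomial, $\obp=k[[t]]$ and $\op=k[[t^s : s\in\sss]]$, so every module in the chain $\op\subseteq\wwp\subseteq\widehat{\oo}_P\subseteq\obp$ is monomial, i.e. spanned over $k$ by powers $t^a$. Consequently the length of a quotient of two such modules equals the number of values by which their value sets differ; this length-equals-value-count principle is what will convert module-theoretic dimensions into cardinalities of sets of integers.

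Next I would record the two value-set computations. The first is classical: with the normalization of $\ww$ fixed in Section \ref{prelim}, the dualizing module is the standard canonical ideal of the semigroup ring, so $v(\wwp)=\kk$ (Barucci--Fr\"oberg \cite{BF}). The second is $v(\widehat{\oo}_P)=\langle\kk\rangle$: the inclusion $\langle\kk\rangle\subseteq v(\widehat{\oo}_P)$ holds because $\widehat{\oo}_P$ is a ring containing $\wwp$, while the reverse inclusion follows from the blowup description $\widehat{\oo}_P=\bigcup_n(\wwp^{n+1}:\wwp^n)$ (see \cite{KM}) together with $1\in\wwp$, which holds since $0\in\kk$ because $\gamma\notin\sss$. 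Combining these two facts with the length principle yields
$$
\eta_P=\#(\kk\setminus\sss),\qquad \mu_P=\#(\langle\kk\rangle\setminus\kk),
$$
which turn both claims into statements about gaps. I expect establishing $v(\widehat{\oo}_P)=\langle\kk\rangle$ to be the main obstacle, since it is the only place where the geometry of the blowup enters; everything afterwards is bookkeeping.

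For the nearly Gorenstein statement, observe first that $\gamma\notin\kk$ always, because $\gamma-\gamma=0\in\sss$. On the other hand $\kk\setminus\sss=\{a\in\gap : \gamma-a\in\gap\}$, so if $C$ is non-Gorenstein there is a gap $a$ with $\gamma-a\in\gap$; then both $a$ and $\gamma-a$ lie in $\kk$, whence $\gamma=a+(\gamma-a)\in\langle\kk\rangle$. Thus $\gamma\in\langle\kk\rangle\setminus\kk$ whenever $C$ is non-Gorenstein. Now $C$ nearly Gorenstein means $\mu_P=\#(\langle\kk\rangle\setminus\kk)=1$ with $C$ non-Gorenstein, so the unique element of $\langle\kk\rangle\setminus\kk$ must be $\gamma$, i.e. $\langle\kk\rangle=\kk\cup\{\gamma\}$; the converse is immediate, as $\langle\kk\rangle=\kk\cup\{\gamma\}$ forces $\mu_P=1$.

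For the Kunz statement, $C$ is Kunz iff $\eta_P=\#A=1$, where $A:=\{a\in\gap : \gamma-a\in\gap\}$. The set $A$ is stable under the involution $a\mapsto\gamma-a$, whose only possible fixed point is $\gamma/2$. Hence $\#A$ is even unless $A$ contains $\gamma/2$, and $\#A=1$ forces $\gamma$ to be even with $A=\{\gamma/2\}$; that is, $\gamma/2$ is the unique gap $a$ for which $\gamma-a$ is again a gap, which is exactly the defining property of a pseudo-symmetric semigroup \eqref{Kunz}. Conversely, if $\sss$ is pseudo-symmetric then $A=\{\gamma/2\}$, so $\eta_P=1$ and $C$ is Kunz. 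This completes the plan.
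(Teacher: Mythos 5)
Your proposal is correct and follows essentially the same route as the paper's proof: reduce to the unique singular point, use monomiality to turn lengths into counts of value sets, identify $v(\wwp)=\kk$ and $v(\widehat{\oo}_P)=\langle\kk\rangle$ via the blowup/smallest-overring description, and finish with semigroup combinatorics. You are in fact slightly more explicit than the paper at two points it leaves implicit, namely deducing $\gamma\in\langle\kk\rangle$ from non-Gorensteinness and running the involution $a\mapsto\gamma-a$ argument for the Kunz claim.
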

\begin{proof}
Assume $C$ is nearly Gorenstein. Recall by \eqref{defnng} this is equivalent to saying that $\mu = 1$. In our case it corresponds to the equality $\dim (\widehat{\mathcal{O}}_{P}/\omega_{P})=1$, i.e., $\# (v_{\pb}(\widehat{\oo}_{P})\setminus v_{\pb}(\wwp))=1$. Now, $v_{\pb}(\wwp)=\kk$; on the other hand, by the very definiton, $\widehat{C} = \mathcal{B}\ell_{\ww}C$. Therefore $\widehat{\oo}_{P}$ is the smallest subring of $\overline{\oo}_{P}$ containing $\wwp$. As $C$ is monomial, $v_{\pb}(\widehat{\oo}_{P})= \langle \kk \rangle$. But $\gamma \in \langle \kk \rangle$, thus $C$ is nearly Gorenstein if and only if $\langle \kk \rangle=\kk \cup \{ \gamma \}$. The last assertion is immediate from \eqref{remrel} and \eqref{Kunz}.
\end{proof}
Let $C$ be a unicuspidal monomial curve with semigroup $\sss$ whose set of gaps is $\gap$. We have seen above that 
\begin{equation*}
H^{0}(\omega)=\{t^i\,|\,i\in\gamma-\gap\}
\end{equation*}
and that the canonical model of $C$ is hence
\begin{equation*}
C'=(1:t^{b_2}:\ldots :t^{b_{\delta}})
\end{equation*}
where $\{0,b_2,\ldots,b_{\delta}\}=\gamma-\gap$.

So write $\gap =\{\ell_1=1,\ell_2,\ldots,\ell_{g-1},\ell_{g}=\gamma \}$, and 
\begin{equation*} 
    \mathbb{P}^{g-1}=\{(X_{\ell_{1}}: \ldots : X_{\ell_{g}})\}=\{(t^{\gamma-1}:t^{\gamma-\ell_2}:\ldots:t^{\gamma-\ell_{g-1}}:1)\}=\mathbb{P}(H^0(\ww)).
\end{equation*}

Also, for each $2\leq s \leq \gamma$ we may write $s = a+b$, where $a,b \in \gap$.
Now we consider all partitions of $s$ as a sum of two gaps, say 
$$
s=a_{s_{i}}+b_{s_{i}}, \ \ \text{for}\ \ i=0, \ldots, \nu_{s}
$$
with $a_{s_{i}} \leq b_{s_{i}}$ and $a_{s_{0}}<a_{s_{1}} \ldots < a_{s_{\nu_{s}}}$. Set $a_{s}:=a_{s_{0}}$ and $b_{s}:=b_{s_{0}}$. Note that $(a_s,b_s)$ is minimal for every $2\leq s\leq \gamma$.
\begin{thm} \label{EBmonomial} Let $C$ be a nearly Gorenstein unicuspidal monomial curve. Then:
\begin{itemize}
\item[(i)] If $C$ is not Kunz, then the ideal of $C'$ is given by 
$$
I(C')=\langle X_{a_{s}}X_{b_{s}}-X_{a_{s_{i}}}X_{b_{s_{i}}} \rangle
$$ 
for $s \in \{2, \ldots, \gamma\}$ and $i \in \{1, \ldots, \nu_{s}\}$.
\item[(ii)] If $C$ is Kunz, but not trigonal computed by a base point free pencil, then the ideal of $C'$ is given by 
$$
I(C') = \langle  X_{a_{s}}X_{b_{s}}-X_{a_{s_{i}}}X_{b_{s_{i}}}, X_{\gamma/2}^{3}-X_{1}X_{a}X_{b}, X_{\gamma/2}^{3}-X_{a'}X_{b'}X_{\gamma}\rangle.
$$
where $(a,b)$ is the minimal decomposition of $3\gamma/2-1$ and $(a',b')$ is the minimal decomposition of $\gamma/2$.
\item[(iii)] If $C$ is trigonal computed by a base point free pencil then the ideal of $C'$ is given by
\begin{align*}
    I(C') = &\langle  X_{a_{s}}X_{b_{s}}-X_{a_{s_{i}}}X_{b_{s_{i}}},  X_{\gamma/2}^{3}-X_{1}X_{a}X_{b}, X_{\gamma/2}^{3}-X_{a'}X_{b'}X_{\gamma}, \\
    & X_{2}X_{\gamma/2}^{2}-X_{1}^{2}X_{\gamma}, X_{3k+r}X_{\gamma/2}^{2}-X_{1}X_{c}X_{d}, X_{3k+r}X_{\gamma/2}^{2}-X_{c'}X_{d'}X_{\gamma}\rangle.
\end{align*}
with $1 \leq k \leq m-1$, $0 < r < 3$, $\gamma=2(3m+r)$, and $(c,d)$ is the minimal decomposition of $3k+r+\gamma-1$ and $(c',d')$ is the minimal decomposition of $3k+r$. 
\end{itemize}
\end{thm}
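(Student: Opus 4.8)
The plan is to exploit the toric description of $C'$ recalled just above the statement. In the coordinates $\mathbb{P}^{g-1}=\{(X_{\ell_1}:\cdots:X_{\ell_g})\}$ one has $\psi(X_\ell)=t^{\gamma-\ell}$, so the multiplication map $\varphi_n:{\rm Sym}^nH^0(\ww)\to H^0(\ww^n)$ sends a degree-$n$ monomial $X_{\ell_{i_1}}\cdots X_{\ell_{i_n}}$ to $t^{n\gamma-(\ell_{i_1}+\cdots+\ell_{i_n})}$. As in the proof of \eqref{thmdim} we have $I_n(C')=\ker\varphi_n$, and since $\varphi_n$ takes monomials to monomials, each $I_n(C')$ has a basis of binomials $X_{\vec a}-X_{\vec b}$ indexed by unordered pairs of degree-$n$ monomials of equal weight $\sum\ell$. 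The whole statement is therefore a purely combinatorial assertion about partitions of integers into gaps of $\sss$, and I would organise the proof around the quadratic piece, the passage to generation, and the cubic piece.

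For the quadrics I record that over each weight $s$ the degree-$2$ monomials are the $X_{a_{s_i}}X_{b_{s_i}}$, so fixing the minimal partition $(a_s,b_s)$ yields the basis $X_{a_s}X_{b_s}-X_{a_{s_i}}X_{b_{s_i}}$ ($i=1,\dots,\nu_s$) of $I_2(C')$. This disposes of (i) at once: if $C$ is nearly Gorenstein but not Kunz then $\eta\ge 2$, so by \eqref{Kleiman}(iii) $C'$ is cut out by quadrics, whence $I(C')=\langle I_2(C')\rangle$ is exactly the listed family.

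The content is in (ii) and (iii), where $C$ is Kunz, i.e. $\sss$ is pseudo-symmetric by \eqref{ngumc}, and $\eta=1$. By \eqref{Kleiman}(ii), $C'$ is cut out by quadrics and cubics, so it remains to exhibit a basis of the cokernel $I_3(C')/\bigl(H^0(\ww)\cdot I_2(C')\bigr)$. The key observation is that reduction by the quadrics $X_{a_s}X_{b_s}-X_{a_{s_i}}X_{b_{s_i}}$ rewrites any pair inside a cubic monomial by its minimal partition; hence a cubic monomial is in normal form exactly when all three of its pairs are minimal, and $H^0(\ww)\cdot I_2(C')$ spans, in each weight, the differences between monomials connected by such reductions. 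Thus the number of new cubic generators in a given weight is one less than the number of all-pairs-minimal monomials of that weight. The all-pairs-minimal monomials avoiding the distinguished indices $1$ and $\gamma$ are precisely the exceptional triples of \eqref{excmon}, which I would classify under pseudo-symmetry using \eqref{exclem}: since for a gap $a\ne\gamma/2$ one has $\gamma-a\in\sss$, the only partition of $\gamma$ into gaps is $(\gamma/2,\gamma/2)$, so the surviving families collapse to $(\gamma/2,\gamma/2,\gamma/2)$ from \eqref{exclem}(v) and, when $\alpha=3$, the triples $(3k+r,\gamma/2,\gamma/2)$ and $(2,\gamma/2,\gamma/2)$ from \eqref{exclem}(iv),(iii). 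For each such triple of weight $W$ its fibre also contains the all-pairs-minimal monomials $X_1X_cX_d$ (with $c+d=W-1$) and $X_{c'}X_{d'}X_\gamma$ (with $c'+d'=W-\gamma$), giving the two generators $X_{b_1}X_{b_2}X_{b_3}-X_1X_cX_d$ and $X_{b_1}X_{b_2}X_{b_3}-X_{c'}X_{d'}X_\gamma$ displayed in the statement; the lone relation $X_2X_{\gamma/2}^2-X_1^2X_\gamma$ in (iii) is the degenerate case in which these two companions coincide.

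I expect the main obstacle to be making the reduction argument precise and then closing it with a dimension count. One must fix a term order so that reduction by the quadrics terminates at the all-pairs-minimal monomials, check that the exhibited cubics are independent modulo $H^0(\ww)\cdot I_2(C')$ (being distinct normal forms), and finally match this against \eqref{thmdim} at $n=3$: computing $\dim I_3(C')$ there, with $\eta=1$ and the value of $\sigma_3$ forced by pseudo-symmetry, and $\dim\bigl(H^0(\ww)\cdot I_2(C')\bigr)$ from the quadratic data, equality of their difference with the number of listed cubics forces the generating set to be complete — in particular this is what rules out hidden contributions from the families \eqref{exclem}(i),(ii). The split between (ii) and (iii) is governed by whether $\alpha=3$, i.e. whether $C$ is trigonal with base-point-free $g^1_3$, which is exactly the condition for the family \eqref{exclem}(iv) to be present.
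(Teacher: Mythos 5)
Your setup is the same as the paper's: the binomial description of $I_n(C')=\ker\varphi_n$ coming from the monomial parametrization, the quadric basis indexed by minimal partitions, and the reduction to degrees $2$ and $3$ via \eqref{Kleiman} (this is the paper's decomposition \eqref{somadireta}). Your disposal of part (i) by invoking \eqref{Kleiman}(iii) directly is a legitimate shortcut (the paper instead re-derives (i) from its reduction argument), and you correctly identify the triples of \eqref{exclem}(iv),(v) as the sources of the cubic generators. The problem is in parts (ii) and (iii), where your proposal has a genuine gap.

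The gap is twofold. First, your classification claim is false: pseudo-symmetry does \emph{not} collapse the exceptional triples to the families \eqref{exclem}(iv),(v). The observation that the only partition of $\gamma$ into two gaps is $(\gamma/2,\gamma/2)$ constrains only those triples containing a partition of $\gamma$, i.e. families (iii) and (v); families (i) and (ii) involve no partition of $\gamma$ and do occur for pseudo-symmetric semigroups. Concretely, take $\sss=\langle 4,11,13\rangle$, with gaps $\{1,2,3,5,6,7,9,10,14,18\}$ and $\gamma=18$ (this is the paper's own example in \eqref{cliffgthm2} for $\alpha=4$; it is pseudo-symmetric, hence Kunz, and nearly Gorenstein, and $\alpha\neq 3$, so your case (ii) applies). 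The triple $(2,3,14)$ is all-pairs-minimal, avoids $1$ and $\gamma$, and is of type \eqref{exclem}(i). Second, your fallback cannot repair this. All-pairs-minimal monomials are \emph{not} unique normal forms for reduction by the quadrics: the rewriting system ``replace a pair by its minimal partition'' terminates but is not confluent. Indeed $X_2X_3X_{14}$ is congruent modulo the quadrics to $X_1X_9X_9$ via the chain $(2,3,14)\sim(2,10,7)\sim(3,9,7)\sim(1,9,9)$, whose intermediate monomials are not all-pairs-minimal (the first move replaces $(3,14)$ by the lexicographically \emph{larger} partition $(10,7)$). So ``number of new generators per weight equals number of all-pairs-minimal monomials minus one'' is wrong, and the dimension count does not close the argument: \eqref{thmdim} gives $\dim I_3(C')$, but $\dim\bigl(H^0(\ww)\cdot I_2(C')\bigr)$ equals the number of cubic monomials minus the number of equivalence classes under quadric moves, and counting those classes is precisely the hard combinatorial problem you are trying to avoid. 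This is exactly what the paper's Steps 1--3 accomplish: explicit chains of quadric moves showing that every exceptional triple of types \eqref{exclem}(i)--(iii) is nonetheless equivalent to a monomial containing $X_1$ or $X_\gamma$, with the nearly Gorenstein hypothesis used in several delicate subcases. That case analysis is the bulk of the proof and is missing from your proposal.
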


\begin{proof} Set $\Gamma_{n} = \{ a_{1} + \ldots + a_{n} \, | \, a_{i} \in \gamma-\gap \} $ and assume $C$ is non-Gorenstein. We claim that, for $n\geq 2$,
\begin{equation} \label{vn}
    H^0(\ww^n) = V_{n}:=\bigg\langle t^{i} \, \big{|}\, i \in \Gamma_{n} \cap [0,\gamma-1] \bigg\rangle \bigcup \bigg\langle t^{i} \, \big{|} \, i \in [\gamma, n(\gamma-1)] \cap \nn \bigg\rangle.
\end{equation}
The assumption that $C$ is non-Gorenstein is used for the fact that $t^{\gamma}\in H^0(\ww^n)$ for any $n\geq 2$ since there are $\ell_i$ and $\ell_j$ with $\ell_i+\ell_j=\gamma$. If so, $t^{\gamma}=t^{\gamma-\ell_i}\times t^{\gamma-\ell_j}\times 1^{n-2}$ which we will see right away that is in $H^0(\ww^n)$.
To prove the claim, note that $t^{i} \in H^{0}(\ww^{n})$ if and only if $t^{i} \in \ww_{R}^{n}$ for all $R \in C$. Now, $\ww_{Q}^{n} = t^{-n(\gamma-1)}$, so $V_{n}\subset\ww_R^n$ as all elements in $V_{n}$ have order at most $n(\gamma-1)$. If $R \neq P,Q$, then $\ww_{R}^{n} = \oo_{R}$, thus $V_{n} \subset \ww_{R}^{n}$ as well. Moreover, $V_{n} \subset \ww_{P}^{n}$ owing to the following equality
$$
\wwp^{n} = H^{0}(\ww)^n + \cp
$$
which can be easily derived from the proof of \cite{KM}*{Lem.~6.1}; also recall the notation \eqref{h0omegan} for the first summand in the right hand side of the equality. Thus, $V_{n} \subset H^{0}(\ww^{n})$. Conversely, by \eqref{thmdim}, $h^{0}(\ww^{n}) = n(2g-2)-(n-1)\eta+\sigma_{n}+1-g$. 
\begin{align*}
 \dim V_{n} & = \# \{\Gamma_{n} \cap [0,\gamma-1] \} +  \# ( [\gamma, n(\gamma-1)] \cap \nn) \\
 &= (g+\sigma_{n}-1)+((n-1)\gamma-n+1) \\
 &= (g+\sigma_{n}-1)+((n-1)(2(g-\eta)+\eta-1)-n+1) \\
 &= n(2g-2)-(n-1)\eta+\sigma_{n}+1-g
\end{align*}
as expected, so $H^{0}(\ww^{n})=V_{n}$. Now, ${\rm Sym}^n H^0(\ww) \twoheadrightarrow H^{0}(\ww^{n})$. In particular. 
\begin{equation} \label{gamma2}
    \{ \gamma, \ldots, 2(\gamma-1) \} \subset \Gamma_{2}
\end{equation}
also, for $n \geq 3$,
\begin{equation} \label{gamma3on}
    \{ (n-1)(\gamma-1)+1, \ldots, n(\gamma-1) \} = \{\gamma, \ldots, 2\gamma-1 \} + (n-2)(\gamma-1) \subset \Gamma_{n}
\end{equation}
because clearly $\gamma-1 \in \gamma-\gap$. 

By \eqref{gamma2}, for every $i\in  \{ \gamma, \ldots, 2(\gamma-1) \}$, we have that $i=(\gamma-\ell_j)+(\gamma-\ell_k)$ for some $1\leq j,k\leq g$. Equivalently, for each $2\leq s \leq \gamma$ we may write $s = a+b$, where $a,b \in \gap$.
Now we consider all partitions of $s$ as a sum of two gaps, say 
$$
s=a_{s_{i}}+b_{s_{i}}, \ \ \text{for}\ \ i=0, \ldots, \nu_{s}
$$
with $a_{s_{i}} \leq b_{s_{i}}$ and $a_{s_{0}}<a_{s_{1}} \ldots < a_{s_{\nu_{s}}}$. Set $a_{s}:=a_{s_{0}}$ and $b_{s}:=b_{s_{0}}$. Note that $(a_s,b_s)$ is minimal for every $2\leq s\leq \gamma$. Set also
$$
A_n:=(\Gamma_n\setminus\Gamma_{n-1})\cap[0,\gamma-1]
$$

Now, for any $\ell\in \gap$, set $x_{\ell}:=t^{\gamma-\ell}\in k(C)$. We have that \eqref{vn}, \eqref{gamma2}, \eqref{gamma3on} yields that we can write down a basis of $H^{0}(\ww^{n})$ as 
$$
\begin{cases}
x_{\ell_{j}}x_{\gamma}^{n-1} &\mbox{\text{for}} \ 1 \leq j \leq g \\
x_{i_1} \ldots x_{i_{k}}x_{\gamma}^{n-k} &\mbox{for} \ 2 \leq k \leq n, \ k\gamma-\sum_{j=1}^{k}i_{j} \in A_{k}, \ \text{with} \ (i_{1}, \ldots, i_{k}) \ \text{minimal}, \\
x_{1}^{i}x_{a_{s}}x_{b_{s}}x_{\gamma}^{n-2-i} &\mbox{\text{for}} \ 0 \leq i \leq n-2, \ 2 \leq s \leq \gamma.
\end{cases}
$$
where the first line provides a basis for $H^{0}(\ww)$, the third line corresponds to the second component of the union in \eqref{vn} and the second line are the remaining elements in the first component of the union in \eqref{vn}.

Let $\Lambda_{n} \subset k[X_{\ell_{1}}, \ldots, X_{\ell_{g}}]_{n}$ be the vector space generated by 
\begin{equation} \label{basisng}
\begin{cases} 
X_{\ell_{j}}X_{\gamma}^{n-1} &\mbox{\text{for}} \ 1 \leq j \leq g \\
X_{i_1} \ldots X_{i_{k}}X_{\gamma}^{n-k} &\mbox{for} \ 2 \leq k \leq n, \ k\gamma-\sum_{j=1}^{k}i_{j} \in A_{k}, \ \text{with} \ (i_{1}, \ldots, i_{k}) \ \text{minimal}, \\
X_{1}^{i}X_{a_{s}}X_{b_{s}}X_{\gamma}^{n-2-i} &\mbox{\text{for}} \ 0 \leq i \leq n-2, \ 2 \leq s \leq \gamma.
\end{cases}
\end{equation}
We claim that
\begin{equation} \label{somadireta}
    k[X_{\ell_{1}}, \ldots, X_{\ell_{g}}]_{n} = I_{n} \oplus \Lambda_{n}
\end{equation}
where $I_{n}:=I_{n}(C')$. Indeed, note that ${\rm Sym}^n H^0(\ww)=k[X_{\ell_{1}}, \ldots, X_{\ell_{g}}]_{n}$. Also, the morphisms ${\rm Sym}^n H^0(\ww) \rightarrow H^{0}(\ww^{n})$, if restricted to $\Lambda_n$, provide, by construction, linear isomorphisms $\Lambda_n\simeq H^0(\ww^n)$. As the $\varphi_{n}$ are surjective and $I_{n}(C')=\ker(\varphi_{n})$, the claim follows.

Now we assume that $C$ is nearly Gorenstein. This means, in our case, that $\dim({\widehat{\oo}_{P}}/\ww_{P})=1$. But we have
\begin{equation}
    \oo_{P} \subset \ww_{P} \subset \ww_{P}^{2} \subset \ldots \subset \ww_{P}^{n} \subset \ldots \subset \widehat{\oo}_{P}
\end{equation}
and also $t^{\gamma}\in \widehat{\oo}_{P}\setminus\ww_{P}$. Therefore $\ww_{P}^{n} = \langle t^{\gamma}, \gamma-\gap \rangle$ for all $n\geq 2$. In particular, $A_n=\emptyset$ for all $n\geq 2$, and the second line in \eqref{basisng} does not exist. 

To prove our result by \eqref{Kleiman}, it suffices to show that $I_{3}$ is contained in the ideal, say $I$, described in (i) and (ii).

So let $(i,j,k),(i',j',k')\in\gap^3$ be two decompositions of a same number in $\Gamma_3$. For shortly, we use $(i,j,k)\sim(i',j',k')$ when so. If the triples have a common coordinate, say $k=k'$, then $X_{i}X_{j}X_{k} \equiv X_{i'}X_{j'}X_{k} \mod I$. Indeed, write $s=i+j$. Then,
\begin{align*}
     X_{i}X_{j}X_{k} - X_{i'}X_{j'}X_{k} & = X_{a_{s}}X_{b_{s}}X_{k}-X_{i'}X_{j'}X_{k}-(X_{a_{s}}X_{b_{s}}X_{k}-X_{i}X_{j}X_{k}) \\
     & = (X_{a_{s}}X_{b_{s}}-X_{i'}X_{j'})X_{k}-(X_{a_{s}}X_{b_{s}}-X_{i}X_{j})X_{k}
\end{align*}
which is an element of $I$. 

So given $F=X_{i}X_{j}X_{k}\in k[X_{\ell_{1}}, \ldots, X_{\ell_{g}}]_{3}$, it suffices to show that $(i,j,k)$ is equivalent to either $(1,a_s,b_s)$ or $(a_s,b_s,\gamma)$ for some $s \in \{2, \ldots, \gamma \}$ owing to \eqref{basisng} and \eqref{somadireta}. If $i=1$ (resp. $k=\gamma$) then $(i,j,k) \sim (1,a_s,b_s)$ where $s=j+k$ (resp. $(i,j,k)\sim (a_s,b_s,\gamma)$ where $s=i+j$), and we are done. Otherwise, assume, first that $(i,j,k)$ is not exceptional, in the sense of \eqref{excmon}. Then one of the pairs $(i,j)$, $(i,k)$ or $(j,k)$ is not minimal. Say the last one is such and write $j+k=s$. Then $(i,j,k)\sim (i,a_s,b_s)$ for $s=j+k$. If $(i,a_{s},b_{s})$ is not exceptional, repeat the procedure. After a finite number of steps we get that $(i,j,k) \sim (i',j',k')$ where either $i'=1$ or $k'=\gamma$, or $(i',j',k')$ is exceptional. Now the exceptional triples are described in Lemma \eqref{exclem}. Let us address its five cases.

{\bf Step 1:} $(a_1,\alpha-1,a_3)$ is as in case (i). 

\noindent Write $a_3=\gamma-s\alpha+p$ for some $s\geq 1$ and $p\in\{0,1\}$. Write also $n_{\tau+1}=\tau\alpha+r$. 

\noindent If $\tau\geq 2$ or $r\neq 1$, thus $a_{1}=2$. If $p=1$, then 
$$
(2,\alpha-1,a_3) = (2,\alpha-1, \gamma-s\alpha+1)
                 \sim (3,\alpha-1, \gamma-s\alpha)
                 \sim (1,\alpha+1, \gamma-s\alpha) 
$$
or, else, if $p=0$, then $s \leq \tau$. If, moreover, $s\geq 2$, then 
\begin{align*}
(2,\alpha-1,a_3) &\sim (2,\alpha-1+a_3-(\gamma-n_{\tau+1}), \gamma-n_{\tau+1}) \\
                 & = (2,\alpha-1+\gamma-s\alpha-\gamma+\tau\alpha+r, \gamma-n_{\tau+1}) \\
                 & = (2,(\tau+1-s)\alpha+r-1, \gamma-n_{\tau+1}) \\
                 & \sim (1,(\tau+1-s)\alpha+r, \gamma-n_{\tau+1}) 
\end{align*} 
and we are done since $1\leq \tau+1-s< \tau$, so the second component is a gap. 

If $r=\tau=1$, then $a_3=\gamma-b\alpha$ for $b=1$ or $2$. If $b=2$, then $[\alpha+2,2\alpha-1]\subset\gap$. Thus $a_1=2$ or $3$. Also, $[\gamma-2\alpha-2,\gamma-2\alpha]\subset\gap$ as well. Therfore
$$
(a_1,\alpha-1,\gamma-2\alpha) \sim (4,\alpha-1, \gamma-2\alpha-(4-a_1)) \sim (1,\alpha+2,\gamma-2\alpha-(4-a_1))  
$$
If $a_3=\gamma-\alpha$, let $e$ be the largest integer such that $[\alpha,\alpha+e]\subset\sss$. Then, clearly,  $[\gamma-\alpha-e,\gamma-\alpha]\in\gap$, and also $a_1\leq e+2$ since $(a_1,\alpha-1)$ is minimal. Assume $e<\alpha-3$. Then $e+3\in\gap$. If $a_1\geq 3$, then
\begin{align*}
(a_1,\alpha-1,\gamma-\alpha) &\sim (e+3,\alpha-1, \gamma-\alpha-(e+3-a_1)) \\
                             &\sim (1,\alpha+e+1, \gamma-\alpha-(e+3-a_1)) 
\end{align*} 

So we are reduced to analyze the case $(a_1,\alpha-1,\gamma-\alpha)$ where either $a_1=2$ or, else, $\tau=1$ and $e\geq\alpha-3$. 


Assume $a_1=2$. As $C$ isn't Gorenstein, $\sss$ is non-symmetric. So we can pick $(a,b)\in\gap^2$ such that $a+b=\gamma$. Assume also $(a,b)$ is minimal. Note that $a>\alpha$. Indeed, the minimality of $(\alpha-1,\gamma-\alpha)$ implies $[\gamma-\alpha+1, \gamma-2]\subset\sss$. So if $a<\alpha$, then $a=1$. But, as $C$ is nearly Gorenstein, $\sss$ is almost symmetric. Then $2\in\kk$, and if so, $\gamma-2\in\gap$, a contradiction.

Now let $c\in\nn^*$ be such that $b-c$ is the greatest gap smaller than $b$. Plainly, $c<\alpha$, as otherwise $[a,a+\alpha-1]\subset\gap$ which cannot occur as there are no $\alpha$ consecutive integers in $\gap$. So, first, assume $c<\alpha-1$. If so,
\begin{align} \label{newtriple}
    (2, \alpha-1,\gamma-\alpha) &\sim (2,\alpha-1+(a-\alpha+c),\gamma-\alpha-(a-\alpha+c)) \\
    &= (2,a+c-1,b-c) \sim (1+c,a,b-c) \sim (1,a,b). \nonumber
\end{align}
Note $a+c-1\in\gap$ as $\gamma-(a+c-1)=b-c+1\in\sss$. Also, $1+c\in\gap$ as $c<\alpha-1$.

Now assume $c=\alpha-1$. Then, clearly, $a=m\alpha+1$ and $b=\gamma-m\alpha-1$. Thus
\begin{align} \label{newtriple2}
    (2, \alpha-1,\gamma-\alpha) &\sim (2,\alpha-1+(m-1)\alpha,\gamma-\alpha-(m-1)\alpha) = (2,m\alpha-1,\gamma-m\alpha) \nonumber \\
    & \sim (3,m\alpha-1,\gamma-m\alpha-1)  \sim (1,m\alpha+1,\gamma-m\alpha-1). \nonumber
\end{align}

If $e \geq \alpha-3$ and $\tau=1$, suppose first that $\alpha \geq 5$. Then either, $\gamma=2\alpha-1$ or $\gamma=2\alpha-2$. If $\gamma=2\alpha-1$, then $\gap\setminus([1,\alpha-1]\cup\{\gamma\})\subset\{\gamma-1\}$; if $\gamma-1\in\gap$ then $\sss$ is not almost symmetric, and if $\gamma-1\not\in\gap$ then $\sss$ is symmetric, so both are precluded by our hypothesis on $C$ being nearly Gorenstein. While if $\gamma=2\alpha-2$, then $C$ is Kunz, for which the canonical model $C'$ is not cut out by quadrics as we will see right away.  

{\bf Step 2:} $(2,k\alpha-1,\gamma-(m+1)\alpha+1)$ as in case (ii). 

\noindent First, note that the case $3 \in \sss$ is equivalent to saying that $C$ is trigonal computed by a base point free pencil by \eqref{trigonalchar}. Moreover, $C$ is Kunz if so by the same result. So we will address this case at the end of this proof. Thus we may assume $3\in\gap$. Then
\begin{align*}
(2,k\alpha-1,\gamma-(m+1)\alpha+1) &\sim (3,k\alpha-2,\gamma-(m+1)\alpha+1) \\
                 & \sim (3,\alpha-1, \gamma-(m+1)\alpha+1+(k-1)\alpha-1) \\
                 & = (3,\alpha-1, \gamma-(m+2-k)\alpha) \\
                 & \sim (1,\alpha+1, \gamma-(m+2-k)\alpha).
\end{align*}

{\bf Step 3:} 
$(a_{1},a_{2},\gamma - a_2)$ with $a_{1}<\alpha, \ a_{2}>\tau\alpha$ as in case (iii). 

\noindent If $a_1=\alpha-1$, then $a_2=k\alpha-1$ and $a_3=\gamma-k\alpha+1$ for $k\geq 2$ since $a_2>\tau\alpha$. Also, $[\gamma-s\alpha-\alpha+1,\gamma-s\alpha-1]\subset\sss$ for $s\leq k$ because $(\alpha-1,a_3)$ is minimal. Thus
\begin{align*}
    (\alpha-1,k\alpha-1,\gamma-k\alpha+1) &\sim (\alpha+1,k\alpha-3,\gamma-k\alpha+1) \\
    &\sim (\alpha+1,\alpha-2,\gamma-\alpha) \sim (1, \alpha-2,\gamma)
\end{align*}
If $a_3<\alpha-1$, assume $\tau\geq 2$, then
\begin{equation*}
(a_{1},a_{2},\gamma-a_{2}) \sim (a_{1}+1,a_{2}-1,\gamma-a_{2})
\sim (a_{1}+1,\alpha-1,\gamma-\alpha)
\sim (1,a_{1}+\alpha-1,\gamma-\alpha)
\end{equation*}
Otherwise, recall the deinition of $e$. Assume $e\leq\alpha-4$ and $a_1\leq e+1$. Then
\begin{align*}
    (a_{1},a_{2},\gamma-a_{2}) &\sim (a_{1}+1,a_{2}-1,\gamma-a_{2}) \sim (a_{1}+1,\alpha-1,\gamma-\alpha) \\
    &\sim (e+3,\alpha-1,\gamma-\alpha-(e+2-a_{1})) \\
    &\sim (1,\alpha+e+1,\gamma-\alpha-(e+2-a_{1})).
\end{align*}
Now the case $e\geq \alpha-3$ was already adressed. And if $a_1\geq e+2$, then
\begin{align*}
    (a_{1},a_{2},\gamma-a_{2}) &\sim (a_{1}+1,a_{2}-1,\gamma-a_{2}) \sim (a_{1}+1,\alpha-1,\gamma-\alpha) \\
    &\sim (a_1-(e+1),\alpha+e+1,\gamma-\alpha) \sim (a_1-(e+1),e+1,\gamma)
\end{align*}

Note that the last two cases of \eqref{exclem} assume that $C$ is Kunz. Thus we have already proved that, otherwise, the ideal of $C'$ is given by
$$
I(C')=\langle X_{a_{s}}X_{b_{s}}-X_{a_{s_{i}}}X_{b_{s_{i}}} \rangle
$$ 
for $s \in \{2, \ldots, \gamma\}$ and $i \in \{1, \ldots, \nu_{s}\}$.

{\bf Step 4:} $(k\alpha+r,\gamma/2,\gamma/2)$ as in case (iv)

\noindent Once again we will assume here $3\in\gap$, otherwise, as said above, we will address the case at the end.  

\noindent Assume $1< r< \alpha-1$. Then
\begin{equation*}
(k\alpha+r,\gamma/2,\gamma/2) \sim (k\alpha+r+1,\gamma/2-1,\gamma/2)
\sim (1,\gamma/2-1,\gamma/2+k\alpha+r)
\end{equation*}
Assume $r=1$. Then
\begin{align*}
(k\alpha+1,\gamma/2,\gamma/2) &\sim (k\alpha+3,\gamma/2-2,\gamma/2)
\sim (k\alpha+3,\gamma/2-\alpha-3,\gamma/2+\alpha+1) \\
&\sim ((k-1)\alpha+3,\gamma/2-\alpha-3,\gamma)
\end{align*}
Assume $r=\alpha-1$. Then
\begin{align*}
    (k\alpha+\alpha-1,\gamma/2,\gamma/2) &= ((k+1)\alpha-1,(m+1)\alpha-1,(m+1)\alpha-1) \\
    &\sim ((k+1)\alpha+1,(m+1)\alpha-3,(m+1)\alpha-1) \\
    &\sim ((k+1)\alpha+1,m\alpha-2,(m+2)\alpha-2) \\
    &\sim (1, m\alpha-2,(m+k+3)\alpha-2)
\end{align*}

{\bf Step 5:} $(\gamma/2,\gamma/2,\gamma/2)$ and $\sss$ is pseudo-symmetric as in case (v). 

\noindent 
For the remainder say an exceptional triple is \emph{irreducible} if it is not equivalent to $(1,a_{s}.b_{s})$, or $(a_{s},b_{s},\gamma)$, or another exceptional triple. It is easily seen that $(\gamma/2,\gamma/2,\gamma/2))$ is irreducible. Thus, if $3 \in \gap$, \eqref{somadireta} yields 
from what was said above, that 
$$
I = \langle  X_{a_{s}}X_{b_{s}}-X_{a_{s_{i}}}X_{b_{s_{i}}}, X_{\gamma/2}^{3}-X_{1}X_{a}X_{b}, X_{\gamma/2}^{3}-X_{a'}X_{b'}X_{\gamma}\rangle.
$$
where $(a,b)$ is the minimal decomposition of $3\gamma/2-1$ and $(a',b')$ is the minimal decomposition of $\gamma/2$.
It follows that $C'$ is cut out by quadrics and cubics.

Now we address the case $3\in \sss$. Note that, if so, $(2,\gamma/2, \gamma/2)$ and $(3k+r, \gamma/2,\gamma/2)$ with $1 \leq k \leq m-1$, $0 < r <3$, $\gamma=2(3m+r)$ are irreducible.  Thus, \eqref{somadireta} yields 
\begin{align*}
    I(C') = &\langle  X_{a_{s}}X_{b_{s}}-X_{a_{s_{i}}}X_{b_{s_{i}}},  X_{\gamma/2}^{3}-X_{1}X_{a}X_{b}, X_{\gamma/2}^{3}-X_{a'}X_{b'}X_{\gamma}, \\
    & X_{2}X_{\gamma/2}^{2}-X_{1}^{2}X_{\gamma}, X_{3k+r}X_{\gamma/2}^{2}-X_{1}X_{c}X_{d}, X_{3k+r}X_{\gamma/2}^{2}-X_{c'}X_{d'}X_{\gamma}\rangle
\end{align*}
where $(c,d)$ is the minimal decomposition of $3k+r+\gamma-1$ and $(c',d')$ is the minimal decomposition of $3k+r$.
\end{proof}


\begin{rem} \label{remebm} As said in the Introduction, the proof above was highly inspired by St\"ohr's \cite{St1}. There are just a few differences we describe next, namely: (a) Here we consider the semigroup of a singularity instead of the Weierstrass semigroup of a simple point; if $C$ is monomial, then the former can be recovered from the latter, where the simple point is the infinity; (b) we opted to put the theory of \emph{exceptional monomials} developed in \cite{St1} within a purely combinatorial form, and reducing to the case of triples, which is what really counts for the sake of the proof; (c) Equation \eqref{somadireta} is crucial for all the argument; in \cite{St1}, it is derived from the projectively normality of the canonical curve, while here it comes from the intrinsic version of Max Noether's theorem proved in \cite{GM} and some dimension counts developed in \eqref{thmdim}; so $C'$ need not to be projectively normal for the proof to work; for instance, note we assumed $C$ is nearly Gorenstein only after \eqref{somadireta} is proved; the reason why is that it is easier to deal with the exceptional triples in this case, but we hope targeting the general problem in a near future; (d) \cite{St1} goes further in the description of the syzygies, as can be seen from the appearance of the coefficients $c_{sir}$ \cite{St1}*{p.~193} whose relations will later describe the moduli of Gorenstein curves with a fixed Weierstrass point; rather, here those coefficients do not appear as our curves are assumed to be monomial; if we had considered those coefficients $c_{sir}$, then we would be describing the moduli of (non-necessarily monomial) rational curves with a prescribed semigroup at a given cusp, which also seems an interesting problem to be address in a forthcoming work; (e) the \emph{pseudo symmetric} semigroups, which characterize Kunz curves, correspond to Oliveira-St\" ohr's \emph{quasi-symmetric} semigroups, addressed by them in \cite{OS}; but in this case, the correspondence is misleading; indeed, in \cite{OS} those curves are Gorenstein though with a non-symmetric Weierstras semigroup, while here Kunz curves are always non-Gorenstein.
\end{rem}
\end{sbs}

\begin{sbs}[Enriques-Babbage's Theorem for Monomial Curves] \label{ebtmc}
Now we characterize trigonal unicuspidal monomial curves and check when they happen to be nearly Gorenstein.

\begin{thm} \label{trigonalchar}
Let $C$ be a unicuspidal monomial trigonal curve with semigroup $\sss$. Then,
\begin{itemize}
\item[(i)] If $\sss = \{ 0, \alpha, \alpha+1, \cdots, \alpha+k, \alpha+k+\ell,\to \}$ for $\alpha\geq 3$, $k\geq 0$, and $\ell\geq 2$, then $C$ is nearly Gorenstein if and only if $C'\ncong\mathbb{P}^1$; 
\item[(ii)] If $\sss = \{0, \alpha, \alpha+2, \cdots, \alpha+2k,\to\}$ for $\alpha\geq 3$, and $k\geq 1$, then $C$ is nearly Gorenstein;
\item[(iii)] If $\alpha=3$ and $\alpha\neq\beta$, then $C$ is nearly Gorenstein if and only if $C$ is Kunz. 
\end{itemize}
\end{thm}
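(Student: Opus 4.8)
The plan is to reduce every assertion to a computation inside the semigroup $\sss$, using Lemma~\eqref{ngumc}: $C$ is nearly Gorenstein iff $\langle\kk\rangle=\kk\cup\{\gamma\}$, and $C$ is Kunz iff $\sss$ is pseudo-symmetric. The starting point is the clean description
\[
\kk=(\gamma-\gap)\ \sqcup\ \{a\in\mathbb{Z}\mid a>\gamma\},
\]
read off \eqref{equkkp}: for $0\le a\le\gamma$ one has $a\in\kk\iff\gamma-a\in\gap$, while every $a>\gamma$ lies in $\kk$ and no $a<0$ does. In particular $\eta=\#\bigl((\gamma-\gap)\setminus\sss\bigr)$, and since $C$ is non-Gorenstein there are gaps with $\ell_i+\ell_j=\gamma$, so $(\gamma-\ell_i)+(\gamma-\ell_j)=\gamma$ already shows $\gamma\in\langle\kk\rangle$. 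Thus in every case the near-Gorenstein test becomes: decide whether $\gamma$ is the \emph{only} element of $\langle\kk\rangle$ missing from $\kk$.

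First I would settle the trigonality dichotomy. If $3\in\sss$, i.e.\ $\alpha=3$, a function with a single pole of order $3$ at the cusp yields a base-point-free $g^{1}_{3}$, so $C$ is trigonal computed by a base-point-free pencil; this is case (iii). If $\alpha\ge4$ no degree-$3$ pencil can be base-point-free at the cusp, so any sheaf computing $\gon(C)=3$ must acquire a base point at $P$; analysing the local fractional ideal of that rank-$1$ torsion-free sheaf and the constraints it imposes on $\sss$ is what forces $\sss$ into the shapes (i) and (ii). This base-point analysis — pinning down exactly which $\alpha\ge4$ semigroups carry a degree-$3$ sheaf with two sections — is the main obstacle; the rest is combinatorial bookkeeping.

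Granting the shapes, the near-Gorenstein computations are direct. For shape (i), with $\gamma=\alpha+k+\ell-1$, one gets $\gamma-\gap=\{0,\dots,\ell-2\}\cup\{k+\ell,\dots,\gamma-1\}$, hence $\kk=\{0,\dots,\ell-2\}\cup\{k+\ell,\dots,\gamma-1\}\cup\{a>\gamma\}$. If $\ell\ge3$ then $1\in\kk$, so $\langle\kk\rangle=\mathbb{N}$ and $\langle\kk\rangle\setminus\kk=[\ell-1,k+\ell-1]\cup\{\gamma\}$ is strictly larger than $\{\gamma\}$; if $\ell=2$ then $\{0\}\cup[k+2,\infty)$ is a semigroup equal to $\kk\cup\{\gamma\}$, so $\langle\kk\rangle=\kk\cup\{\gamma\}$. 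Thus $C$ is nearly Gorenstein iff $\ell=2$. To match the stated criterion I would then read $C'$ off its parametrization $C'=(1:t^{b_2}:\dots:t^{b_\delta})$ with exponents $\gamma-\gap$: as $\psi:\cb\to C'$ is already an isomorphism away from the cusp, $C'\cong\bb{P}^{1}$ iff $\psi$ is an immersion at the cusp iff the least positive exponent of $\gamma-\gap$ equals $1$, i.e.\ iff $\ell\ge3$. Hence nearly Gorenstein $\iff\ell=2\iff C'\ncong\bb{P}^{1}$. For shape (ii), $\gamma=\alpha+2k-1$ gives $\gamma-\gap=\{0,2,\dots,2k-2\}\cup\{2k,\dots,\gamma-1\}$; here $\{0,2,\dots,2k-2\}\cup[2k,\infty)$ is a semigroup equal to $\kk\cup\{\gamma\}$, so $\langle\kk\rangle=\kk\cup\{\gamma\}$ and $C$ is nearly Gorenstein unconditionally.

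Finally, case (iii). One implication is free: $\eta=1\Rightarrow\mu=1$ by \eqref{remrel}(ii), so Kunz $\Rightarrow$ nearly Gorenstein. For the converse, with $\alpha=3$ the semigroup has embedding dimension at most $3$; if it is $2$ then $\sss$ is symmetric and $C$ is Gorenstein (the degenerate $\langle3,4,5\rangle$, for which $C'\cong\bb{P}^{1}$, being exactly what $\alpha\ne\beta$ excludes), so $\sss=\langle3,c_1,c_2\rangle$ with Ap\'ery set $\{0,c_1,c_2\}$, $c_1\equiv1$, $c_2\equiv2\pmod3$. I would then compute $\kk$ and $\langle\kk\rangle$ by residues modulo $3$ in the two regimes $c_1>c_2$ and $c_2>c_1$, and show that $\langle\kk\rangle=\kk\cup\{\gamma\}$ forces $\gamma$ even with the single value $\gamma/2$ accounting for $\kk\setminus\sss$, i.e.\ $\sss$ pseudo-symmetric, hence $C$ Kunz by \eqref{ngumc}. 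The crux is that the small residues make $\langle\kk\rangle$ large, so requiring it to overshoot $\kk$ by exactly one element is highly restrictive; establishing this uniformly in $(c_1,c_2)$ is the secondary obstacle.
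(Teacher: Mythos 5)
Your cases (i) and (ii) are, in substance, the paper's own proof: compute $\kk$ explicitly from the shape of $\sss$, apply Lemma \eqref{ngumc}, and in case (i) translate the outcome through ``$1\in\gamma-\gap$ iff $C'\cong\mathbb{P}^{1}$''; your sets $\kk$ agree with the paper's and the conclusions match. Note also that your self-declared ``main obstacle'' --- deriving the three semigroup shapes from trigonality by a base-point analysis --- is not actually part of what must be proved: each item takes its shape (or the condition $\alpha=3$, $\alpha\neq\beta$) as a \emph{hypothesis}, and the exhaustiveness of the list is simply quoted from \cite{FGMS}*{Thm.~5.3} in the paper. So that omission costs you nothing.

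The genuine gap is case (iii). The direction Kunz $\Rightarrow$ nearly Gorenstein is indeed immediate from \eqref{remrel}(ii), but the converse --- the substantive half of (iii) --- is precisely what you postpone (``I would then compute $\kk$ and $\langle\kk\rangle$ by residues \dots\ is the secondary obstacle''), so the proposal does not prove it; moreover the Ap\'ery-set analysis in two regimes $(c_1>c_2$, $c_2>c_1)$ that you outline is heavier than needed. The paper closes this in a few lines with no generators at all: since nearly Gorenstein implies non-Gorenstein, pick any gaps $a\le b$ with $a+b=\gamma$. Because $\alpha=3$, every non-negative multiple of $3$ lies in $\sss$; as $a$, $\gamma-a$, $\gamma$ are gaps, their residues mod $3$ lie in $\{1,2\}$, and $\gamma-a\notin\sss$ forces the residues of $a$ and $\gamma$ to differ, whence $2a\equiv\gamma \pmod 3$. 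Since $a\le\gamma/2$, the number $\gamma-2a$ is a non-negative multiple of $3$, hence lies in $\sss$, i.e.\ $2a\notin\kk$; but $a\in\kk$, so $2a\in\langle\kk\rangle=\kk\cup\{\gamma\}$, forcing $2a=\gamma$. As the pair $(a,b)$ was arbitrary, the only gap $a$ with $\gamma-a$ also a gap is $a=\gamma/2$, i.e.\ $\sss$ is pseudo-symmetric, and $C$ is Kunz by Lemma \eqref{ngumc}. Separately, one caveat affecting your write-up exactly as it affects the paper's: in shape (i) with $\ell=2$ and $k=\alpha-2$ the semigroup $\{0,\alpha,\dots,2\alpha-2,2\alpha,\to\}$ is symmetric, so $C$ is Gorenstein, $\gamma\notin\langle\kk\rangle$, and ``nearly Gorenstein iff $\ell=2$'' fails; your reduction silently uses non-Gorensteinness (to place $\gamma$ in $\langle\kk\rangle$), so this subcase must be excluded or flagged explicitly.
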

\begin{proof} The characterization of the three possible semigroups was already done in \cite{FGMS}*{Thm.~5.3}. To prove the remaining statement we address those cases.

\noindent {\bf case (i):}

Note that $\kk=\{0, \ldots, \gamma-(\alpha+k+1) \} \cup \{\gamma-\alpha+1, \ldots, \gamma-1\} \cup \{\gamma+1, \to \}$. Then, by \eqref{ngumc}, it is easily seen that $C$ is nearly Gorenstein if and only if $\gamma-(\alpha+k+1)=0$. Indeed, otherwise $1\in\kk$, and hence $\gamma-(\alpha+k)\in\langle \kk\rangle$. But $\gamma-(\alpha+k)\not\in\kk\cup\{\gamma\}$. Now $\gamma-(\alpha+k+1)=0$ iff $\alpha+k=\gamma-1$ iff $\gamma-1\in\sss$ iff $1\not\in\kk$ iff $C'\ncong \mathbb{P}^1$. 

\noindent {\bf case (ii):} 


Now note that $\kk=\{0,2,4,6,\ldots, \gamma-\alpha-1 \} \cup \{\gamma-\alpha+1, \ldots, \gamma-1\} \cup \{\gamma+1, \to \}$. Then, clearly, $\langle\kk\rangle=\kk\cup\{\gamma\}$ and hence $C$ is nearly Gorenstein by \eqref{ngumc}. 

\noindent {\bf case (iii):} 


If $C$ is nearly Gorenstein, it is not Gorenstein, so there exists $a\leq b \in \gap$ such that $a+b=\gamma$.  Say $\gamma\equiv i\ {\rm mod}\ 3$ and $a\equiv j\ {\rm mod}\ 3$. Clearly $i,j\in\{1,2\}$ since $\gamma$ and $a$ are not in $\sss$. Also, $i\neq j$ since $\gamma-a\not\in\sss$. Therefore $2a\equiv\gamma\ {\rm mod}\ 3$ and hence $\gamma-2a\in\sss$, which implies $2a\not\in\kk$. By \eqref{ngumc}, we have $2a=\gamma$, i.e., $a=b=\gamma/2$. As $a,b$ are arbitrary, it follows that $C$ is Kunz. The converse is a general fact. 
\end{proof}
\begin{thm} \label{gororngor}
Let $C$ be a unicuspidal monomial curve of genus $g\geq 3$ whose canonical model $C'$ is linearly normal. Then one, and only one, of the following holds 
\begin{itemize}
    \item[(i)] $C'$ is cut out by quadrics or;
    \item[(ii)] $C$ is trigonal and Gorenstein or; 
    \item[(iii)] $C$ is isomorphic to a plane quintic or; 
    \item[(iv)] $C$ is Kunz. 
\end{itemize}
Moreover, assume $C$ is trigonal and non-Gorenstein. If the $g_{3}^{1}$ is base point free, then $C$ satisfies case (iv). Otherwise, it may be included in cases (i) or (iv). 
\end{thm}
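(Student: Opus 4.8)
The plan is to dichotomize on whether $C$ is Gorenstein, using the linear normality of $C'$ to control the non-Gorenstein case. First I would observe that, since $C'$ is linearly normal, $C$ is either Gorenstein or nearly Gorenstein: by \cite{KM}*{Thm.~5.10} the nearly Gorenstein curves are exactly the non-Gorenstein ones whose canonical model is linearly normal (cf.\ \eqref{remrel}). This divides the proof into two regimes.

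In the Gorenstein regime I would first dispose of the hyperelliptic case, where $C'$ is the rational normal curve of degree $g-1$ by \cite{KM}*{Thm.~3.4} and is therefore cut out by quadrics, giving (i). For $C$ non-hyperelliptic and Gorenstein I would invoke the Enriques-Babbage theorem for Gorenstein curves, available through \cites{CS,OS,Sc,St1}; it yields exactly one of: $C'$ cut out by quadrics (i), $C$ trigonal---which, being Gorenstein, is case (ii)---or $C$ isomorphic to a plane quintic (iii).

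In the nearly Gorenstein regime $C$ is non-Gorenstein, and I would appeal directly to \eqref{EBmonomial}. If $C$ is not Kunz then $I(C')$ is generated by quadrics, so (i) holds. If $C$ is Kunz then \eqref{EBmonomial}(ii),(iii) produce genuine cubic generators $X_{\gamma/2}^{3}-X_{1}X_{a}X_{b}$ and $X_{\gamma/2}^{3}-X_{a'}X_{b'}X_{\gamma}$, so $C'$ is not cut out by quadrics and we are in case (iv). Mutual exclusivity is then formal: case (iv) is non-Gorenstein whereas (ii),(iii) are Gorenstein and are the two distinct exceptional cases of the classical theorem, while (i) is separated from (ii),(iii),(iv) because trigonal canonical curves, plane quintics, and Kunz curves are each not cut out by quadrics.

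For the final assertion, assume $C$ is trigonal and non-Gorenstein, hence nearly Gorenstein. If the $g_{3}^{1}$ is base point free then $3\in\sss$, so $\alpha=3$, and \eqref{trigonalchar}(iii) forces $C$ to be Kunz, i.e.\ case (iv). If the $g_{3}^{1}$ has a base point then $3\notin\sss$, so $\alpha>3$ and $C$ lies in one of the families of \eqref{trigonalchar}(i),(ii); by \eqref{ngumc} it is Kunz precisely when its semigroup is pseudo-symmetric, so that $C$ lies in (i) or (iv) accordingly. The deepest input here---that a Kunz curve is never cut out by quadrics---is already secured by \eqref{EBmonomial} through the irreducible exceptional triple $(\gamma/2,\gamma/2,\gamma/2)$ of \eqref{exclem}(v), where $X_{\gamma/2}^{3}$ cannot be reached from the quadratic relations. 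Granting that, the main remaining obstacle is the base-point dichotomy just described, namely translating ``$g_{3}^{1}$ base point free'' into the arithmetic condition $3\in\sss$; a further routine check is the mutual exclusivity of (i) against (ii),(iii), which rests on the classical fact that trigonal canonical curves and plane quintics are not cut out by quadrics.
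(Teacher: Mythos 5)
Your architecture coincides with the paper's: split via linear normality into the Gorenstein and nearly Gorenstein regimes, dispose of hyperelliptic curves because $C'$ is the rational normal curve, settle the non-Gorenstein case by \eqref{EBmonomial} (with the cubics being genuinely necessary for Kunz curves because the triple $(\gamma/2,\gamma/2,\gamma/2)$ is irreducible), and read the final trigonal assertion off \eqref{trigonalchar}. All of that matches the paper's proof of \eqref{gororngor} step for step.

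The gap is in the Gorenstein non-hyperelliptic case. You ``invoke the Enriques-Babbage theorem for Gorenstein curves, available through \cites{CS,OS,Sc,St1}'', but no such theorem exists in those references in the generality you need; the paper is explicit about this (Introduction and \eqref{remebb}): such a statement only \emph{could be obtained} from those papers, under extra hypotheses such as a point with symmetric Weierstrass semigroup, or characteristic zero for Schreyer's approach. What makes the step legitimate here is precisely the unicuspidal monomial hypothesis, and the paper has to use it: since the cusp is unibranch and Gorenstein, its semigroup $\sss$ is symmetric, so \cite{CS}*{Lem.~2.2} applies and shows the canonical curve is cut out by quadrics unless $\sss$ is one of $\langle 2,2g+1\rangle$, $\langle 3,g+1\rangle$, $\{0,g,g+1,\ldots,2g-2,2g,\to\}$, $\langle 4,5\rangle$; these four semigroups are then handled one by one (hyperelliptic; the two trigonal Gorenstein semigroups, identified as such via \eqref{trigonalchar}, where the exclusivity of (i) rests on \cite{RSt}*{Prp.~3.3}, since the quadrics through a trigonal canonical curve cut out a two-dimensional scroll rather than the curve; and the plane quintic $\langle 4,5\rangle$). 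Without this reduction --- symmetric semigroup, the explicit exceptional list, and the scroll argument --- the Gorenstein trichotomy in your second paragraph is asserted rather than proved, and the claimed mutual exclusivity of (i) against (ii) is left hanging on an unreferenced ``classical fact''.
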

\begin{proof}
Firt assume $C$ is hyperelliptic. Then $C'$ is the rational normal curve of degree $g-1$ in $\mathbb{P}^{g-1}$ by \cite{KM}*{Prp.~2.6}. So $C'$ is cut out by quadrics and (i) holds. But if $C$ is hyperelliptic, then $\gon(C)=2$ so $\gon(C)\neq 3,4$ so (ii) and (iii) does not hold; also, $C$ is Gorenstein so (iv) does not hold as well.

Assume $C$ is non-hyperelliptic Gorenstein and that the semigroup $\sss$ at the singular point is different from $\langle 2, 2g+1 \rangle$, $\langle 3, g+1 \rangle$, $\{0,g,g+1,\ldots,2g-2,2g,\to\}$ and $\langle 4, 5 \rangle$. Then \cite{CS}*{Lem.~2.2} implies the canonical embedding of $C$, which is isomorphic to $C'$, is cut out by quadrics, so (i) holds. Now note that, by \eqref{trigonalchar}, $C$ is trigonal Gorenstein if and only if $\sss=\{0,g,g+1,\ldots,2g-2,2g,\to\}$ or $\sss=\langle 3,g+1 \rangle$. Indeed, those cases correspond to first and third items of \eqref{trigonalchar} if one assumes $\sss$ is symmetric since $C$ is Gorenstein; while the second item of \eqref{trigonalchar} is always a non-symmetric semigroup. So (ii) does not hold. As $\sss\neq\langle 4,5\rangle$, then (iii) does not hold as well. And as $C$ is Gorenstein, then (iv) is precluded.

If $\sss=\langle 2,2g+1\rangle$, then $C$ is hyperelliptic and we repeat the first paragraph of this proof. 

If $\sss=\{0,g,g+1,\ldots,2g-2,2g,\to\}$ or $\sss=\langle 3,g+1 \rangle$, then, as said above, $C$ is trigonal and Gorenstein. So (ii) holds. But (iii) and (iv) clearly do not. To prove (i) is also false, we appeal to \cite{RSt}*{Prp.~3.3}: if $C$ is a trigonal Gorenstein canonical curve then the intersection of all quadrics containing $C$ is a rational normal scroll of dimension $2$, so (i) does not hold.

If $\sss=\langle 4,5\rangle$ then (iii) holds. Also, (i) clearly does not; (ii) does not since $\gon(C)=4\neq 3$; and (iv) is precluded as $C$ is plane and hence Gorenstein.

Finally, if $C$ is non-Gorenstein then $C'$ is lineary normal if and only if $C$ is nearly Gorenstein owing to \cite{KM}*{Thm.~6.5}. Therefore \eqref{EBmonomial} does the trick.

Now assume, $C$ is trigonal non-Gorenstein computed by a base point free pencil. Then the proof of \eqref{trigonalchar} yields $3 \in \sss$ and the $g_3^1$ is $\oo_C\langle 1,t^3\rangle$. Thus \eqref{trigonalchar}.(iii) implies (iv) holds as $C$ is nearly Gorenstein. If the gonality is computed by a pencil with an irremovable base point, then the proof of \eqref{trigonalchar} clearly imply (i) or (iv) may hold. 
\end{proof}

\begin{rem}
\label{remebb}
As said in the Introduction, we conjecture this result may hold in general. Indeed, the argument above for a hyperelliptic curve is general. If the curve is Gorenstein, with gonality at least 4, with a Weierstrass point with symmetric semigroup then one could apply St\"ohr \cite{St1}*{Thm.~2.6}, or Contiero-St\"ohr \cite{CS}*{Thm.~2.5}. To drop this hypothesis, if $k$ is of characteristic $0$, one could consider Schreyer's approach to Petri's analysis in \cite{Sc}*{Thm.~3.1}, as pointed out by Oliveira-St\"or in \cite{OS}*{p.~56, bot}. And if $C$ is non-Gorenstein then \eqref{Kleiman} gets close: non-Kunz nearly Gorenstein curves are cut out by quadrics, so it suffices to prove that Kunz curves are not.
\end{rem}
\end{sbs}

Next we build a family of unicuspidal monomial Kunz curves of arbitrary high Clifford index, then violating Green's Conjecture at level $p=1$, as, already seen, the Koszul cohomology of $\ww$ doesn't vanish for those curves.

\begin{thm} \label{cliffgthm2}
For every $n \in \mathbb{N}$, there exists at least one monomial unicuspidal Kunz curve of Clifford index $n$.
\end{thm}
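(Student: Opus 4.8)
The plan is to translate the statement into the combinatorics of numerical semigroups and then exhibit an explicit family. By \eqref{ngumc}, a unicuspidal monomial curve $C$ is Kunz precisely when its semigroup $\sss$ is pseudo-symmetric, so it suffices to produce, for each $n\geq 1$, a pseudo-symmetric semigroup whose associated monomial curve has Clifford index exactly $n$. The guiding principle is that for these curves the gonality equals the multiplicity $\alpha=\min(\sss\setminus\{0\})$ — this is implicit in \eqref{trigonalchar}, where the trigonal case corresponds to $\alpha=3$ with pencil $\oo_C\langle 1,t^3\rangle$ — and that the gonality pencil should compute the Clifford index, yielding $\cliff(C)=\alpha-2$. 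I would therefore aim to realize $\cliff(C)=n$ by fixing the multiplicity at $\alpha=n+2$.

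First I would construct the family. For each $n\geq 1$ I select a pseudo-symmetric semigroup $\sss$ with $\alpha=n+2$ and large Frobenius number $\gamma$: concretely, one forces $1,\ldots,n+1\in\gap$, places the extra gap at $\gamma/2$, and completes the remaining pairs $\{x,\gamma-x\}$ so that exactly one member of each pair is a gap, which is the pseudo-symmetry condition of \eqref{Kunz}. Keeping $\alpha=n+2$ fixed while letting $\gamma$ grow is harmless and guarantees that the genus $g=\gamma/2+1$ is as large as we wish. Verifying that the resulting set is a numerical semigroup and is pseudo-symmetric is a routine check on the gap pairs.

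Next comes the upper bound $\cliff(C)\le n$. Here I would use the degree-$\alpha$ pencil $\fff=\oo_C\langle 1,t^{\alpha}\rangle$, whose degree comes from the pole at the smooth point at infinity and which satisfies $h^0(\fff)=2$, so $\cliff(\fff)=\alpha-2=n$. For this sheaf to contribute to $\cliff(C)$ one needs $h^1(\fff)\geq 2$; by Riemann--Roch $h^1(\fff)=g+1-\deg(\fff)=g+1-\alpha$, so choosing $\sss$ with $g\geq\alpha+1$ (which the previous step permits, since $\gamma$ is free to grow) secures $h^1(\fff)\geq 2$ and hence $\cliff(C)\le n$.

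The main obstacle is the matching lower bound $\cliff(C)\ge n$, that is, ruling out any contributing torsion-free rank-$1$ sheaf of strictly smaller Clifford index. I would address this through the combinatorial description of such sheaves on a unicuspidal monomial curve: each corresponds to a relative ideal (an $\sss$-submodule $M\subset\mathbb{Z}$) at the cusp together with a twist on the rational normalization $\cb=\mathbb{P}^1$, and for these the invariants $\deg$, $h^0$ and $h^1$ are read off from $M$ and $\sss$, in the spirit of the dimension counts of \eqref{thmdim}. Minimizing $\cliff(\fff)=\deg(\fff)-2(h^0(\fff)-1)$ over this discrete set, and invoking the study of the Clifford index of integral curves in \cite{FGMS}, one should obtain that the minimum over contributing sheaves is $\alpha-2$. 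The delicate point is excluding sheaves with $h^0(\fff)\ge 3$ that might a priori beat the pencil — the analogue of the plane-quintic exception in the classical Enriques--Babbage picture — and it is precisely here that the explicit pseudo-symmetric choice of $\sss$ must be exploited; I expect this step to require the most care.
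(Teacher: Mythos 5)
Your reduction via \eqref{ngumc} (Kunz $\Leftrightarrow$ pseudo-symmetric) and your upper bound are correct and agree with the paper: $\fff=\oo_C\langle 1,t^{\alpha}\rangle$ has degree $\alpha$ concentrated at infinity, $h^0(\fff)=2$, and $h^1(\fff)=g+1-\alpha\geq 2$ once $g\geq\alpha+1$, so $\cliff(C)\leq\alpha-2$. But there are two genuine gaps. First, your construction rests on the implicit claim that \emph{any} pseudo-symmetric semigroup of multiplicity $\alpha=n+2$ (with $\gamma$ large enough) gives $\cliff(C)=\alpha-2$, and this is false. Take $\sss=\langle 5,6,13\rangle=\{0,5,6,10,11,12,13,15,\to\}$: here $\gamma=14$, the gaps are $\{1,2,3,4,7,8,9,14\}$, the pair $\{x,14-x\}$ contains exactly one gap for $x\neq 7$, and $\gamma/2=7$ is the extra gap, so $\sss$ is pseudo-symmetric of multiplicity $5$ and genus $g=8\geq\alpha+1$ --- an instance of exactly the recipe you describe for $n=3$. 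A unicuspidal monomial realization is $(1:t^5:t^6:t^{12}:t^{13})\subset\mathbb{P}^4$ (smooth at infinity because the top two exponents are consecutive). For this Kunz curve the sheaf $\fff=\oo_C\langle 1,t^5,t^6\rangle$ has $\deg\fff=6$, $h^0(\fff)=3$ (sections $1,t^5,t^6$), hence $h^1(\fff)=4$ and $\cliff(\fff)=6-2\cdot 2=2<3=\alpha-2$. So fixing the multiplicity does not pin down the Clifford index; the fine structure of $\sss$ is essential. This is precisely why the paper does not take an arbitrary pseudo-symmetric semigroup but the very particular family $\langle\alpha,\alpha(\alpha-2)+3,\ldots,\alpha(\alpha-1)-1,\alpha(\alpha-1)+1\rangle$, whose second-smallest nonzero element lies far above $\alpha$: that choice blocks low-degree sheaves with $h^0\geq 3$ of the kind exhibited above.

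Second, even granting a good choice of $\sss$, the lower bound $\cliff(C)\geq n$ --- which you correctly flag as the delicate point --- is exactly what you leave unproved (``one should obtain\ldots'', ``I expect this step to require the most care''). In the paper this is the bulk of the argument: every contributing sheaf is put in the monomial form $\fff=\oo_C\langle 1,t^{a_1},\ldots,t^{a_n}\rangle$, its degree and cohomology are read off from the value set $v(\fff_P)$ via \cite{FGMS}*{Lem.\,4.1}, the comparison inequalities \eqref{cond1}--\eqref{cond2} allow one to add sections without increasing the Clifford index, and then a five-case analysis according to the position of $a_n$ in the explicit interval decomposition \eqref{structureofS}--\eqref{ints} of $[1,\gamma]$ shows that no contributing sheaf (in particular none with $h^0\geq 3$) beats $\alpha-2$. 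Without this analysis, and without a semigroup engineered so that the analysis can be carried out, the proposal is an outline of the statement's shape rather than a proof.
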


\begin{proof}
We will prove this result by giving an explicit family of curves that satisfy the hypothesis and then we will calculate their Clifford indexes. For $\alpha \geq 3$, consider the family of curves $(1:t^{\alpha}:t^{\alpha(\alpha-2)+3}:t^{\alpha(\alpha-2)+4}:\cdots:t^{\alpha(\alpha-1)}:t^{\alpha(\alpha-1)+1}) \subset \mathbb{P}^{\alpha}$ whose semigroups have as minimal sets of generators $\langle 3, 7 \rangle$ if $\alpha=3$ and $\langle \alpha, \alpha(\alpha-2)+3, \alpha(\alpha-2)+4,\cdots:\alpha(\alpha-1)-1,\alpha(\alpha-1)+1\rangle$ if $\alpha \geq 4$. These curves are clearly monomial and unicuspidal.  To see this, let us first describe the structure of $\sss$. For short, for any integers $a,b$, we set $[a,b]:=[a,b]\cap\nn$. So we may write
\begin{equation} \label{structureofS}
   [1,\gamma]=\gap_1\cup \sss_1\cup\gap_2\cup\ldots\cup\sss_{2(\alpha-2)}\cup\gap_{2\alpha-3} 
\end{equation}
with
\begin{equation} \label{intgap}
       \gap_{i} = \begin{cases}
      [(i-1)\alpha+1\, ,\, i\alpha-1] & \mbox{if $1 \leq i \leq \alpha-2$} \\
      [(\alpha-2)\alpha+1,(\alpha-2)\alpha+2] & \mbox{if $i = \alpha-1$} \\
      \{(i-\alpha+1)\alpha+(\alpha-2)\alpha+2\} & \mbox{if $\alpha \leq i \leq 2\alpha-3$} 
    \end{cases} 
\end{equation}

and
\begin{equation} \label{ints}
       \sss_{i} = \begin{cases}
      i\alpha & \mbox{if $1 \leq i \leq \alpha-2$} \\
      [(i-1)\alpha+3, i\alpha+1] & \mbox{if $\alpha -1 \leq i \leq 2\alpha-4$}.
    \end{cases}
\end{equation}
First, we will show that its Frobenius number is given by $(2\alpha-4)\alpha+2$. By \eqref{ints}, it does not belong to $\sss$ and also $[(2\alpha-4)\alpha+3, (2\alpha-3)+1] \subset \sss$. We have that $(2\alpha-4)\alpha+2+\alpha = (\alpha-2)\alpha+3+(\alpha(\alpha-1)-1) \in \sss$, so the result follows. 

Also, it can be verified that these semigroups are pseudo-symmetric. To this end we will use \cite{GR}*{Prop.\,4.4}. Clearly, $\gamma=2(\alpha-2)\alpha+2$ is an even number and by construction $\gamma/2=(\alpha-2)\alpha+1 \notin \sss$. So given a gap $\ell \in \gap$, we have to show that $\gamma-\ell \in \sss$. For instance, let $\ell=(i-1)\alpha+j$ for some $i \in [1, \alpha-2]$ and $j \in [1,\alpha-1]$. $\gamma-[(i-1)\alpha+j]=(2\alpha-3-i)\alpha+(2-j)$. So the result follows by considering the possible values of $i$ and $j$. The remaining cases are handled similarly. Hence the previously described curves are Kunz. 

Let 
\begin{equation} \label{monsheaf}
\fff:=\oo_{C}\langle 1, t^{a_{1}}, \ldots, t^{a_{n}} \rangle    
\end{equation}
be a sheaf on $C$ that contributes to the Clifford index. Set $\ff :=v(\fff_P)$. First, suppose that $\fff=\oo\langle 1,t^{a_n} \rangle$ with $a_{n} \in \ff \cap \gap_{m+1}$, so $a_{n}=m\alpha+j$ for some $m \in [0, \alpha-2]$ and $j \in [1,\alpha-1]$. Also, let $a_{n_i}$ be the positive integers such that $a_{n_i} \equiv a_{n} \mod \alpha$ and $a_{n_i} \leq a_{n}$. Note that if $a_{n} \equiv 1 \mod \alpha$ or $a_{n} \equiv 2 \mod \alpha$:
\begin{align} \label{cond1}
\cliff(\fff+\sum_{i=1}^m t^{a_{n_i}}\oo) & = (m-1)(\alpha-2)+(k-1)+(2\alpha-3-(m-1)) \nonumber \\ 
& \leq (m-1)(\alpha-1)+(k-1)+ (2\alpha-3-(m-1)) \nonumber \\
& \leq\cliff(\fff+t^{a_{n}}\oo).
\end{align}
On the other hand, if $a_{i} \equiv j \mod \alpha$ for $j \in [3, \alpha-2]$, we have 
\begin{align} \label{cond2}
\cliff(\fff+\sum_{i=1}^m t^{a_{n_i}}\oo) & = m(\alpha-2)+(k-1)+(2\alpha-4-(m+1)) \nonumber \\ 
& = m(\alpha-1)+(k-1)+(2\alpha-4-(m+1)-m)  \nonumber \\
& =\cliff(\fff+t^{a_{n}}\oo).
\end{align}
So, given a global section $t^{a_{n}}$, the sheaf with the additional global sections $t^{a_{n_i}}$ will have a less than or equal Clifford index to the original sheaf. In order to prove the remaining result, we will consider all possible intervals that $a_{n}$ can belong according to \eqref{ints} and \eqref{intgap}. Also, set ${\rm E}:=\bigcup_{i=1}^n(a_i+\sss)$. Then $\ff=\sss\cup{\rm E}$.

\noindent {\bf case 1:} 
$a_{n} \in \sss_{i}$ for $1 \leq i \leq \alpha-2$.

Let $k:=\ff\cap\gap_{1}$ and $l \in [1, \alpha-2]$. By \cite{FGMS}*{Lem.\,4.1}, we get
{\allowdisplaybreaks
\begin{align*}
\cliff(\fff)&=\#((\gap\setminus{\rm E})\cap[1,a_n])-\#(\sss\cap[1,a_n])+\#(({\rm E}\setminus\sss)\cap[a_n+1,\gamma]) \\
&=\sum_{i=1}^{l}(\#(\gap_{i}\setminus{\rm E})-\#\sss_{i})+\sum_{i=l+1}^{\alpha-2} \#({\rm E}\cap \gap_{i}) \\
& \ \ \ \ \ \ \ \ + \mathds{1}_{\{((1+l\alpha) \cap \ff)\neq\varnothing\}}+\mathds{1}_{\{((2+l\alpha)\cap \ff) \neq \emptyset \}}+(\alpha-2)\mathds{1}_{\{\ff \setminus \sss \neq \emptyset \}} \\
&= l((\alpha-1-k)-1)+k(\alpha-2-l) \\
& \ \ \ \ \ \ \ \ + \mathds{1}_{\{((1+l\alpha) \cap \ff)\neq\varnothing\}}+\mathds{1}_{\{((2+l\alpha)\cap \ff) \neq \emptyset \}}+(\alpha-2)\mathds{1}_{\{\ff \setminus \sss \neq \emptyset \}}
\end{align*}
}
where $\mathbbm{1}_{A}$ denotes the indicator function with respect to an arbitrary set $A$. The last four terms are clearly non-negative. The first term can only be non-negative if and only if $k=\alpha-1$. But in that case, $\ff = \gap$ and hence $h^{1}(\fff)=0$, by \cite{FGMS}*{Lem.\,4.1}. If the last three terms are zero, we have that $\fff$ is locally free. Hence, $k=0$ and the minimum is attained at $l=1$, which implies $\cliff(\fff)=\alpha-2$. If the third and the fourth terms are simultaneously greater than zero, we can use \eqref{cond1}. But this would imply that $\cliff(\fff) \geq \alpha$ based only on the the last terms. Assume now that third term is greater than zero and the fourth term is equal zero. We can apply \eqref{cond1} again and conclude that $\cliff(\fff) \geq \alpha+2$. The same happens in the remaining case. 

\noindent {\bf case 2:} 
$a_{n} \in \sss_{i}$ for $\alpha \leq i \leq 2\alpha-4$.

By the definition of the Clifford index, given a sheaf $\fff$, the condition  $h^{1}(\fff)\geq 2$ must be verified. Hence, $(i-\alpha+1)\alpha+(\alpha-2)\alpha+2 \notin \ff \cap \gap$ for some $i \in [\alpha, 2\alpha-3]$. This imply that $\cliff(\fff) \geq \alpha -1$.

\noindent {\bf case 3:} $a_{n} \in \gap_{i}$ for $1 \leq i \leq \alpha-2$. 

We can use \eqref{cond1} and \eqref{cond2} which imply that we can choose $\fff$ such that $\ff \cap [1, \alpha-1] \neq \varnothing$. Thus, we can conclude that $\cliff(\fff) \geq \alpha-2$.

\noindent {\bf case 4:} $a_{n} \in \gap_{i}$ for $1 \leq i \leq \alpha-1$.

As described in \eqref{intgap}, these sets have only two elements. Note that if $a_{n}=(\alpha-2)\alpha+2$, we have that $a_{n}+i\alpha$ for $i \in [1, \alpha-2]$ coincides with the last line of \eqref{intgap}. By \cite{FGMS}*{Lem.\,4.1} this implies that $h^{1}(\fff)=0$, so a sheaf of that form cannot contribute to the Clifford index. So $a_{n}=(\alpha-2)\alpha+1$ is the only possibility. Using again the fact that $h^{1}(\fff) \geq 2$ must hold and by applying an similar argument to the second case, $[1, \alpha-1] \cap \ff$ must be empty. This implies that $\cliff(\fff) \geq \alpha-1$.  

\noindent {\bf case 5:} $a_{n}=(i - \alpha + 1)\alpha + (\alpha - 2)\alpha + 2 \in \gap_{i}$ for $\alpha \leq i \leq 2\alpha - 3$. 

This case is clearly not possible as these sheaves will not contribute to the Clifford index, since $h^{1}(\fff)=0$ will necessarily hold. 

Thus, all sheaves of the form \eqref{monsheaf} contributing to Clifford index have Clifford index greater or equal than $\alpha-2$. Therefore the Clifford index is computed by $\fff = \oo\langle1,t^{\alpha}\rangle$, which yields $\cliff(C)=\alpha-2$.
\end{proof}

\begin{center} \scshape References \end{center}
\begin{biblist}
\parskip = 0pt plus 2pt



\bib{AK}{article}{
   author={Altman, Allen B.},
   author={Kleiman, Steven L.},
   title={Compactifying the Jacobian},
   journal={Bull. Amer. Math. Soc.},
   volume={82},
   date={1976},
   number={6},
   pages={947--949},
   issn={0002-9904},
   review={\MR{429908}},
   doi={10.1090/S0002-9904-1976-14229-2},
}

\bib{ApF}{article}{
   author={Aprodu, Marian},
   author={Farkas, Gavril},
   title={Koszul cohomology and applications to moduli},
   conference={
      title={Grassmannians, moduli spaces and vector bundles},
   },
   book={
      series={Clay Math. Proc.},
      volume={14},
      publisher={Amer. Math. Soc., Providence, RI},
   },
   isbn={978-0-8218-5205-7},
   date={2011},
   pages={25--50},
   review={\MR{2807847}},
}

\bib{ACGH}{book}{
   author={Arbarello, E.},
   author={Cornalba, M.},
   author={Griffiths, P. A.},
   author={Harris, J.},
   title={Geometry of algebraic curves. Vol. I},
   series={Grundlehren der mathematischen Wissenschaften [Fundamental
   Principles of Mathematical Sciences]},
   volume={267},
   publisher={Springer-Verlag, New York},
   date={1985},
   pages={xvi+386},
   isbn={0-387-90997-4},
   review={\MR{0770932}},
   doi={10.1007/978-1-4757-5323-3},
}


\bib{B}{article}{
   author={Babbage, D. W.},
   title={A note on the quadrics through a canonical curve},
   journal={J. London Math. Soc.},
   volume={14},
   date={1939},
   pages={310--315},
   issn={0024-6107},
   review={\MR{0000496}},
   doi={10.1112/jlms/s1-14.4.310},
}

\bib{Bal}{article}{
   author={Ballico, E.},
   title={Brill-Noether theory for rank 1 torsion free sheaves on singular
   projective curves},
   journal={J. Korean Math. Soc.},
   volume={37},
   date={2000},
   number={3},
   pages={359--369},
   issn={0304-9914},
   review={\MR{1760368}},
}


\bib{BF}{article}{
   author={Barucci, Valentina},
   author={Fr\"{o}berg, Ralf},
   title={One-dimensional almost Gorenstein rings},
   journal={J. Algebra},
   volume={188},
   date={1997},
   number={2},
   pages={418--442},
   issn={0021-8693},
   review={\MR{1435367}},
   doi={10.1006/jabr.1996.6837},
}


\bib{CFMt}{article}{
   author={Contiero, Andr\'e},
   author={Feital, Lia},
   author={Martins, Renato Vidal},
   title={Max Noether's theorem for integral curves},
   journal={J. Algebra},
   volume={494},
   date={2018},
   pages={111--136},
   issn={0021-8693},
   review={\MR{3723173}},
   doi={10.1016/j.jalgebra.2017.10.009},
}

\bib{CS}{article}{
   author={Contiero, Andr\'{e}},
   author={St\"{o}hr, Karl-Otto},
   title={Upper bounds for the dimension of moduli spaces of curves with
   symmetric Weierstrass semigroups},
   journal={J. Lond. Math. Soc. (2)},
   volume={88},
   date={2013},
   number={2},
   pages={580--598},
   issn={0024-6107},
   review={\MR{3106737}},
   doi={10.1112/jlms/jdt034},
}



\bib{E}{book}{
   author={Eisenbud, David},
   title={The geometry of syzygies},
   series={Graduate Texts in Mathematics},
   volume={229},
   note={A second course in commutative algebra and algebraic geometry},
   publisher={Springer-Verlag, New York},
   date={2005},
   pages={xvi+243},
   isbn={0-387-22215-4},
   review={\MR{2103875}},
}




\bib{En}{article}{
    author={Enriques, F.},
     title={Sulle curve canoniche di genera $p$ cello spazio a $p-1$ dimensioni},
   journal={Rend. Accad. Sci. Ist. Bologna},
   volume={23},
   date={1919},
   number={ },
   pages={80--82},
   issn={},
   review={},
   doi={},
}


\bib{FGMS}{arXiv}{
      title={On Clifford dimension for singular curves}, 
      author={Feital, Lia},
      author={Galdino, Naam\~{a}},
      author={Martins, Renato Vidal}, 
      author={Souza, \'Atila Felipe},
      year={2025},
      eprint={2507.13506},
      archivePrefix={arXiv},
      primaryClass={math.AG},
      url={https://arxiv.org/abs/2507.13506}, 
}



\bib{F2}{article}{
   author={Fujita, T.},
   title={Defining equations for certain types of polarized varieties},
   conference={
      title={Complex analysis and algebraic geometry},
   },
   book={
      publisher={Iwanami Shoten Publishers, Tokyo},
   },
   date={1977},
   pages={165--173},
   review={\MR{0437533}},
}
\bib{GM}{article}{
   author={Martins, Renato Vidal},
   author={Gagliardi, Edson Martins},
   title={Max Noether theorem for singular curves},
   journal={Manuscripta Math.},
   volume={173},
   date={2024},
   number={3-4},
   pages={1217--1232},
   issn={0025-2611},
   review={\MR{4704774}},
   doi={10.1007/s00229-023-01478-3},
}

\bib{Gr}{article}{
   author={Green, Mark L.},
   title={Koszul cohomology and the geometry of projective varieties},
   journal={J. Differential Geom.},
   volume={19},
   date={1984},
   number={1},
   pages={125--171},
   issn={0022-040X},
   review={\MR{0739785}},
}



\bib{KM}{article}{
   author={Kleiman, Steven Lawrence},
   author={Martins, Renato Vidal},
   title={The canonical model of a singular curve},
   journal={Geom. Dedicata},
   volume={139},
   date={2009},
   pages={139--166},
   issn={0046-5755},
   review={\MR{2481842}},
   doi={10.1007/s10711-008-9331-4},
}

\bibitem{KM2} S. L. Kleiman and R. V. Martins, {\it The gonality of an Integral Curve}, work in progress.

\bib{Martens}{article}{
   author={Martens, G.},
   title={\"Uber den Clifford-Index algebraischer Kurven},
   language={German},
   journal={J. Reine Angew. Math.},
   volume={336},
   date={1982},
   pages={83--90},
   issn={0075-4102},
   review={\MR{0671322}},
   doi={10.1515/crll.1982.336.83},
}


\bib{Mt}{article}{
   author={Vidal Martins, Renato},
   title={A generalization of Max Noether's theorem},
   journal={Proc. Amer. Math. Soc.},
   volume={140},
   date={2012},
   number={2},
   pages={377--391},
   issn={0002-9939},
   review={\MR{2846308}},
   doi={10.1090/S0002-9939-2011-10904-3},
}




\bib{M2}{book}{
   author={Mumford, David},
   title={Lectures on curves on an algebraic surface},
   series={Annals of Mathematics Studies},
   volume={No. 59},
   note={With a section by G. M. Bergman},
   publisher={Princeton University Press, Princeton, NJ},
   date={1966},
   pages={xi+200},
   review={\MR{0209285}},
}

\bib{N}{article}{
   author={Noether, M.},
   title={Ueber die invariante Darstellung algebraischer Functionen},
   language={German},
   journal={Math. Ann.},
   volume={17},
   date={1880},
   number={2},
   pages={263--284},
   issn={0025-5831},
   review={\MR{1510067}},
   doi={10.1007/BF01443474},
}

\bib{OS}{article}{
   author={Oliveira, Gilvan},
   author={St\"ohr, Karl-Otto},
   title={Gorenstein curves with quasi-symmetric Weierstrass semigroups},
   journal={Geom. Dedicata},
   volume={67},
   date={1997},
   number={1},
   pages={45--63},
   issn={0046-5755},
   review={\MR{1468860}},
   doi={10.1023/A:1004995513658},
}

\bib{P}{article}{
   author={Petri, K.},
   title={\"{U}ber die invariante Darstellung algebraischer Funktionen einer
   Ver\"{a}nderlichen},
   language={German},
   journal={Math. Ann.},
   volume={88},
   date={1923},
   number={3-4},
   pages={242--289},
   issn={0025-5831},
   review={\MR{1512130}},
   doi={10.1007/BF01579181},
}


\bib{RSt}{article}{
   author={Rosa, Renata},
   author={St\"{o}hr, Karl-Otto},
   title={Trigonal Gorenstein curves},
   journal={J. Pure Appl. Algebra},
   volume={174},
   date={2002},
   number={2},
   pages={187--205},
   issn={0022-4049},
   review={\MR{1921820}},
   doi={10.1016/S0022-4049(02)00122-6},
}

\bib{GR}{book}{
   author={Rosales, J. C.},
   author={Garc\'ia-S\'anchez, P. A.},
   title={Numerical semigroups},
   series={Developments in Mathematics},
   volume={20},
   publisher={Springer, New York},
   date={2009},
   pages={x+181},
   isbn={978-1-4419-0159-0},
   review={\MR{2549780}},
   doi={10.1007/978-1-4419-0160-6},
}

\bib{R}{article}{
   author={Rosenlicht, Maxwell},
   title={Equivalence relations on algebraic curves},
   journal={Ann. of Math. (2)},
   volume={56},
   date={1952},
   pages={169--191},
   issn={0003-486X},
   review={\MR{0048856}},
   doi={10.2307/1969773},
}



\bib{Sc}{article}{
   author={Schreyer, Frank-Olaf},
   title={A standard basis approach to syzygies of canonical curves},
   journal={J. Reine Angew. Math.},
   volume={421},
   date={1991},
   pages={83--123},
   issn={0075-4102},
   review={\MR{1129577}},
   doi={10.1515/crll.1991.421.83},
}


\bib{At}{thesis}{
  author={Souza, A.},
  title={Sobre Invariantes relacionados \`a Conjectura de Green para Curvas Singulares},
  type={Ph.D. Thesis},
  organization={UFMG},
  date={2023}, 
  note={$\#230^a$ Tese},
  url={https://www.mat.ufmg.br/posgrad/teses-de-doutorado-2/}
}


\bib{SV2}{article}{
   author={St\"ohr, Karl-Otto},
   author={Viana, Paulo},
   title={Weierstrass gap sequences and moduli varieties of trigonal curves},
   journal={J. Pure Appl. Algebra},
   volume={81},
   date={1992},
   number={1},
   pages={63--82},
   issn={0022-4049},
   review={\MR{1173824}},
   doi={10.1016/0022-4049(92)90135-3},
}


\bib{St1}{article}{
   author={St\"{o}hr, Karl-Otto},
   title={On the moduli spaces of Gorenstein curves with symmetric
   Weierstrass semigroups},
   journal={J. Reine Angew. Math.},
   volume={441},
   date={1993},
   pages={189--213},
   issn={0075-4102},
   review={\MR{1228616}},
}

\bib{Vo1}{article}{
   author={Voisin, Claire},
   title={Green's generic syzygy conjecture for curves of even genus lying
   on a $K3$ surface},
   journal={J. Eur. Math. Soc. (JEMS)},
   volume={4},
   date={2002},
   number={4},
   pages={363--404},
   issn={1435-9855},
   review={\MR{1941089}},
   doi={10.1007/s100970200042},
}

\bib{Vo2}{article}{
   author={Voisin, Claire},
   title={Green's canonical syzygy conjecture for generic curves of odd
   genus},
   journal={Compos. Math.},
   volume={141},
   date={2005},
   number={5},
   pages={1163--1190},
   issn={0010-437X},
   review={\MR{2157134}},
   doi={10.1112/S0010437X05001387},
}

\end{biblist}

\end{document}